\newtheorem{definition}{Definition}[section]
\newtheorem{theorem}[definition]{Theorem}
\newtheorem{lemma}[definition]{Lemma}
\newtheorem{coro}[definition]{Corollary}
\newtheorem{proposition}[definition]{Proposition}
\newtheorem{remark}[definition]{Remark}
\begin{document}
\title{Quaternionic Projective Bundle Theorem and Gysin Triangle in MW-Motivic Cohomology}
\author{Nanjun Yang}
\thanks{This work has been partially supported by ERC ALKAGE.}
\address{Institut Fourier-UMR 5582 \\ Universit\'e Grenoble-Alpes \\ CS 40700 \\ 38058 Grenoble Cedex 9 \\ France}
\email{ynj.t.g@126.com}

\keywords{MW-motivic cohomology, Chow-Witt ring}
\subjclass[2010]{Primary: 11E81, 14F42}

\begin{abstract}
In this paper, we show that the motive of the quaternionic Grassmannian \(HP^n\) (as defined by I. Panin and C. Walter) splits in the category of effective MW-motives (as defined by B. Calm\`es, F. D\'eglise and J. Fasel). Moreover, we extend this result to an arbitrary symplectic bundle,  obtaining the so-called quaternionic projective bundle theorem. Finally, we give the Gysin triangle in MW-motivic cohomology.
\end{abstract}

\maketitle

\tableofcontents

\section{Introduction}
Thoughout, we denote by \(k\) the base field and \(pt:=\textrm{Spec }k\).

The aim of this paper is to investigate the fundamental properties of MW-motivic cohomology, as defined by B. Calm\`es, F. D\'eglise and J. Fasel. This cohomology theory is a generalization of ordinary motivic cohomology as developed by V. Voevodsky. One of the basic properties of the latter is the projective bundle theorem (see \cite[2.10]{D}, \cite[Theorem 15.12]{MVW}, \cite[Theorem 4.5]{SV}:
\begin{proposition}\label{pbt}
Let \(X\) be a smooth scheme over a perfect field \(k\) and \(\mathscr{E}\) be a vector bundle of rank \(n\) over \(X\), then the map
\[\xymatrixcolsep{5pc}\xymatrix{\mathbb{Z}_{pt}(\mathbb{P}(\mathscr{E}))\ar[r]^-{p\boxtimes c_1(O_E(1))^i}&\oplus_{i=0}^{n-1}\mathbb{Z}_{pt}(X)(i)[2i]}\]
is an isomorphism in \(DM^{eff,-}(pt)\), where \(p:\mathbb{P}(\mathscr{E})\longrightarrow X\) is the structure map of the projective bundle of \(E\) and \(c_1\) is the first Chern class map.
\end{proposition}
It is the motivic version of the same theorem of Chow rings (see \cite[A.2]{H}) and implies the existence of Chern classes of vector bundles in Chow rings. Here \(DM^{eff,-}(pt)\) (see \cite[Definition 14.1]{MVW}) is the Voevodsky's category of effective motives, obtained by \(\mathbb{A}^1\)-localization of the (bounded above) derived category of Nisnevich sheaves with tranfers and \(\mathbb{Z}_{pt}(X)\) is the motive of \(X\) in \(DM^{eff,-}(pt)\).

Proposition \ref{pbt} is a general consequence of the fact that motivic cohomology is a \(GL\)-oriented cohomology theory (see \cite{P1} and \cite{PS}) in the sense of \cite[Definition 3.3]{A2}. In constrast, MW-motivic cohomology fails to be \(GL\)-oriented, so one cannot expect a projective bundle formula as usual (see Remark \ref{pw}). Thus it's natual to ask whether there are analogues for \(Sp\)-oriented or \(SL\)-oriented cohomology theories. In cohomological terms, there are some results in literature. For example, \cite{PW} (resp. \cite{A}) has shown the quaternionic projective bundle theorem (resp. special linear projective bundle theorem) for \(Sp\)-oriented cohomology theories (resp. \(SL\)-oriented cohomology theories with invertible hopf map). The Chow-Witt groups of \(BSp_{2n}\) and \(BSL_n\) are also calculated in \cite{HW}. Our work is to give the motivic analogue of the quaternionic projective bundle theorem in \cite{PW} (see Theorem \ref{quaternionic projective bundle theorem}) over any smooth base in the case of MW-motivic cohmology:
\begin{theorem}\label{quaternionic projective bundle theorem1}
Let \(S\) be a smooth scheme over an infinite perfect field of characteristic different from \(2\), \(X\) being smooth over \(S\) and \((\mathscr{E},m)\) be a symplectic vector bundle of rank \(2n+2\) on \(X\). Then, the map
\[\xymatrixcolsep{6pc}\xymatrix{\widetilde{\mathbb{Z}}_S(HGr_X(\mathscr{E}))\ar[r]^-{\pi\boxtimes p_1(\mathscr{U}^{\vee})^i}&\oplus_{i=0}^{n}\widetilde{\mathbb{Z}}_S(X)(2i)[4i]}\]
is an isomorphism in \(\widetilde{DM}^{eff,-}(S)\), where \(HGr_X(\mathscr{E})\) is the quaternionic projective bundle of \(\mathscr{E}\) (see Definition \ref{quaternionic projective bundle}), parametrizing rank two symplectic subbundles of \(\mathscr{E}\), \(\pi:HGr_X(\mathscr{E})\longrightarrow X\) is its structure map, \(\mathscr{U}^{\vee}\) is the dual tautological bundle and \(p_1\) is the first Pontryagin class map (see Definition \ref{first Pontryagin class}).
\end{theorem}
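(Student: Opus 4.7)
The plan is to adapt the strategy of Panin--Walter \cite{PW} to the MW-motivic setting. First I would reduce to the case where \(\mathscr{E}\) is the trivial symplectic bundle \(\mathscr{O}_X^{2n+2}\). Since symplectic bundles are Zariski-locally trivial (by a symplectic Gram--Schmidt process), Mayer--Vietoris / Zariski descent in \(\widetilde{DM}\) lets me induct on the number of opens in a trivializing cover, applying the five lemma at each step. In the trivial case \(HGr(\mathscr{E}) \cong X \times_k HGr(\mathscr{O}_k^{2n+2})\), and compatibility of \(\widetilde{M}\) with this product decomposition reduces the statement further to the absolute case \(X = \operatorname{Spec}(k)\), namely the computation of \(\widetilde{M}(HP_k^n)\).

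Next I would proceed by induction on \(n\); the base case \(n=0\) is trivial since \(HGr\) of a rank-\(2\) symplectic space is a point. For the inductive step, fix a symplectic subspace \(V_{2n} \subset V_{2n+2}\), inducing a closed embedding \(i\colon HP^{n-1} \hookrightarrow HP^n\) parametrizing those rank-\(2\) symplectic subspaces contained in \(V_{2n}\). A direct geometric computation shows that the open complement \(U = HP^n \setminus HP^{n-1}\) is an affine bundle over a suitable base (so its MW-motive collapses by \(\mathbb{A}^1\)-invariance), and that the normal bundle of \(i\) is symplectic of rank \(4n\). The Gysin triangle for symplectic normal bundles, the paper's other main result, then yields an exact triangle
\[\widetilde{M}(U) \longrightarrow \widetilde{M}(HP^n) \longrightarrow \widetilde{M}(HP^{n-1})(2n)[4n] \longrightarrow \widetilde{M}(U)[1],\]
which, combined with the inductive hypothesis, splits \(\widetilde{M}(HP^n)\) into summands of the predicted form \(\widetilde{\mathbb{Z}}(2j)[4j]\) for \(0 \le j \le n\).

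The third and most delicate step is to identify this abstract splitting with the map concretely given by \(\pi \boxtimes p_1(\mathscr{U}^{\vee})^{j}\). This demands a projection/Whitney-type formula relating \(p_1(\mathscr{U}^{\vee})\) on \(HP^n\) to its pullback along \(i\) and to the Thom class of the symplectic normal bundle of \(i\) under the canonical orientation of Lemma \ref{canonical orientation}. I expect the main obstacle to be twofold: first, matching the canonical orientation normalizing \(p_1\) with the one implicit in the Gysin isomorphism, so that the top Pontryagin power \(p_1(\mathscr{U}^{\vee})^n\) is pushed forward from the Thom class; and second, establishing enough orthogonality among the summands in \(\widetilde{DM}\) to promote a splitting of the Gysin triangle into a genuine isomorphism realized by powers of \(p_1(\mathscr{U}^{\vee})\). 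These orientation and compatibility checks, rather than the existence of the Gysin triangle itself, are where the technical heart of the argument will lie.
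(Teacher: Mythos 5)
There is a genuine logical gap in the second step: you propose to establish the splitting of \(\widetilde{M}(HP^n)\) by invoking the Gysin triangle for symplectic normal bundles, but in this paper (and already in Panin--Walter) that Gysin triangle is a \emph{consequence} of the quaternionic projective bundle theorem, not an input to it. Concretely, the Gysin triangle rests on the symplectic Thom isomorphism \(\widetilde{M}_X(E)\cong\widetilde{M}(X)(2n)[4n]\) (Theorem \ref{Thom}), whose proof compares \(\widetilde{M}(HGr_X(\mathscr{E}))\) with \(\widetilde{M}(HGr_X(O_X\oplus\mathscr{E}\oplus O_X))\) using precisely Theorem \ref{quaternionic projective bundle theorem}. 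So your inductive step is circular as stated. The paper instead proves the splitting of \(\widetilde{M}(HP^n)\) directly by Mayer--Vietoris on the Panin--Walter open cover \(\{X_0^a\}\): each \(X_0^a\) is \(\mathbb{A}^1\)-contractible, the relevant intersections are motives of \(\mathbb{A}^{2a}\setminus 0\) times affine spaces (computed by a smash-product induction in Proposition \ref{punctured affine space}), and the resulting short exact sequences of sheaves are shown to split by an explicit Chow--Witt computation (pullback isomorphisms on \(\widetilde{CH}^i\) via localization, homotopy invariance for \(\mathbb{A}^2\)-bundles and the Thom isomorphism for Chow--Witt groups with support, which are already available). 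No motivic Gysin triangle is needed at this stage.

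Two further points. First, your geometric bookkeeping is off: the closed embedding \(HP^{n-1}\hookrightarrow HP^n\) given by a symplectic subspace \(V_{2n}\subset V_{2n+2}\) has codimension \(4\), not \(2n\), so even granting a Gysin triangle the twist would be \((4)[8]\) and the induction would not produce \(\oplus_{j=0}^n\widetilde{\mathbb{Z}}(2j)[4j]\); the correct ``cell'' decomposition uses the codimension-\(2\) closed stratum \(Gr(2,V_a)\cap HP^n\), which is an \(\mathbb{A}^2\)-bundle over \(HP^{n-1}\), with open complement the \(\mathbb{A}^1\)-contractible \(X_0^a\). Second, your first step (reduction to the trivial bundle by Mayer--Vietoris over a trivializing cover, then to \(X=\operatorname{Spec}k\)) and your diagnosis of the third step (orientation-sensitive identification of the splitting with powers of \(p_1(\mathscr{U}^{\vee})\)) do match the paper; the paper carries out the third step by an inductive Chow--Witt computation (Proposition \ref{Chow-Witt}) using the transversal Cartesian square for a section of \(\mathscr{U}^{\vee}\), together with an orthogonality lemma and a ``free generator'' criterion to upgrade generation of Hom-groups to an isomorphism in \(\widetilde{DM}\).
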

In the statement of the theorem, \(\widetilde{DM}^{eff,-}(S)\) is the category of effective MW-motives over \(S\) as defined in \cite[\S 3.2]{DF} and \cite[Definition 6.3]{Y}, obtained by \(\mathbb{A}^1\)-localization of the (bounded above) derived category of Nisnevich sheaves with MW-tranfers over \(S\) and \(\widetilde{\mathbb{Z}}_S(X)\) is the motive of \(X\) in \(\widetilde{DM}^{eff,-}(S)\).

Moreover, we provide a Gysin triangle (Theorem \ref{gysin triangle}) for MW-motivic cohomology following the method of  \cite{A2}. Recall that the Gysin triangle in Voevodsky's category of effective motives is of the following form (\cite[Theorem 15.15]{MVW} and \cite[Theorem 4.10]{SV}).

\begin{proposition}\label{g}
Let \(X\) be a smooth scheme over a perfect field and \(Y\subseteq X\) be a smooth closed subscheme with \(\textrm{codim}(Y)=n\). Then we have a distinguished triangle in \(DM^{eff,-}(pt)\):
\[\mathbb{Z}_{pt}(X\setminus Y)\longrightarrow \mathbb{Z}_{pt}(X)\longrightarrow \mathbb{Z}_{pt}(Y)(n)[2n]\longrightarrow \mathbb{Z}_{pt}(X\setminus Y)[1].\]
\end{proposition}

In the case of MW-motivic cohomology, the Gysin triangle is a little bit more complicated:

\begin{theorem}
Let \(S\) be a smooth scheme over an infinite perfect field of characteristic different from \(2\), \(X, Y\) being smooth over \(S\) and \(Y\) be a closed subscheme of \(X\) with \(\textrm{codim}(Y)=n\).
\begin{enumerate}
\item
{If \(detN_{Y/X}\cong O_Y\), we have a distinguished triangle
\[\widetilde{\mathbb{Z}}_S(X\setminus Y)\longrightarrow\widetilde{\mathbb{Z}}_S(X)\longrightarrow \widetilde{\mathbb{Z}}_S(Y)(n)[2n]\longrightarrow\widetilde{\mathbb{Z}}_S(X\setminus Y)[1]\]
in \(\widetilde{DM}^{eff,-}(S)\).}
\item
If \(S=pt\) and {\(n>0\)}, we have a distinguished triangle
\[\widetilde{\mathbb{Z}}_S(X\setminus Y)\longrightarrow\widetilde{\mathbb{Z}}_S(X)\longrightarrow Th_S(det(N_{Y/X}))(n-1)[2n-2]\longrightarrow\widetilde{\mathbb{Z}}_S(X\setminus Y)[1]\]
in \(\widetilde{DM}^{eff,-}(S)\), where the third term only depends (up to an isomorphism) {on \(n\)} and the class of \(det(N_{Y/X})\) in \(Pic(Y)/2Pic(Y)\).
\end{enumerate} 
\end{theorem}

{Here for every vector bundle \(E\) over \(X\), \(Th_S(E)\) is the Thom space (see Definition \ref{thomdef}) of \(E\).}

The organization of the paper is as follows. In Section \ref{MWcomplexes}, we briefly survey the properties of the category \(\widetilde{DM}^{eff,-}(S)\) for the convenience of the reader, which all have analogues in \(DM^{eff,-}(S)\). Here, our exposition slightly differs from the one in \cite{DF}, avoiding altogether the notion of model category. We also recall the definition and basic properties of MW-motivic cohomology. In Section \ref{Grass}, we recall the definition of the quaternionic Grassmannian and basic constructions of the symplectic Thom structure, while Section \ref{sl} is devoted to explaining the \(SL\)-orientation of Chow-Witt rings. The proof of the quaternionic projective bundle theorem is in Section \ref{Projbundle}. The proof of the Gysin triangle in the last section concludes this paper.

\textbf{Conventions. }
\begin{enumerate}
\item All the schemes are over an infinite perfect field \(k\) of characteristic not \(2\) unless specified. The field $k$ is called the base field. We denote the category of smooth and separated schemes by \(Sm/k\). For any \(S\in Sm/k\), we denote the category of smooth and {separated} schemes over \(S\) by \(Sm/S\).
\item For any category \(\mathscr{C}\) and \(X,Y\in\mathscr{C}\), we set \(\mathscr{C}(X,Y)=Hom_{\mathscr{C}}(X,Y)\).
\item We always use the notation \(S^c\) to denote the complement of the subset \(S\) in some set.
\end{enumerate}

\section{MW-motivic complexes over smooth bases}\label{MWcomplexes}

In this section, we recall the basic definitions and facts about the category of MW-motives over smooth bases following \cite{CD}, \cite{CF}, \cite{DF} and \cite{Y} (and sometimes \cite{MVW} and \cite{SV} when we appeal to properties of the category of ordinary motives).

\subsection{Sheaves with MW-transfers}
Let {\(m\in\mathbb{Z}\)}, \(F/k\) be a finitely generated field extension of the base field \(k\) and \(L\) be a one-dimensional $F$-vector space. One can define \(K^{MW}_{ m}(F,L)\) as in \cite[Remark 2.21]{Mo}. If \(X\) is a smooth scheme, \(\mathscr{L}\) is a line bundle over \(X\) and \(y\in X\), we set 
\[
\widetilde{K}^{MW}_{ m}(k(y),\mathscr{L}):=K^{MW}_{ m}(k(y),\Lambda_y^*\otimes_{k(y)}\mathscr{L}_y),
\]
where \(k(y)\) is the residue field of \(y\) and \(\Lambda_y^*\) is the exterior product of the tangent space of \(y\). If \(T\subset X\) is a closed set, \(n\in\mathbb{N}\), define
\[C_{RS,T}^n(X;\underline{K}_m^{MW};\mathscr{L})=\bigoplus_{y\in X^{(n)}\cap T}\widetilde{K}^{MW}_{m-n}(k(y),\mathscr{L}),\]
where \(X^{(n)}\) means the points of codimension \(n\) in \(X\). Then \(C_{RS,T}^*(X;\underline{K}_m^{MW};\mathscr{L})\) form a complex (see \cite[Definition 4.11]{Mo}, \cite[Remark 4.13]{Mo}, \cite[Theorem 4.31]{Mo} and \cite[D\'efinition 10.2.11]{F1}), which is called the Rost-Schmid complex with support on \(T\). The readers may refer to \cite[p. 18-19]{Y} for a detailed treatment of differential maps of that complex. Define (see \cite[Definition 3.1]{CF})
\[\widetilde{CH}^n_T(X,\mathscr{L})=H^n(C^*_{RS,T}(X;\underline{K}_n^{MW};\mathscr{L})).\]

Suppose \(S\in Sm/k\). For any \(X, Y\in Sm/S\) (recall our conventions on smooth schemes), define \(\mathscr{A}_S(X,Y)\) to be the poset of closed subset in \(X\times_SY\) such that each of its component is finite over a connected component of \(X\) and of dimension \(dimX\). Let
\[\widetilde{Cor}_S(X,Y):=\varinjlim_T\widetilde{CH}_T^{dimY-dimS}(X\times_SY,\omega_{X\times_SY/X})\]
be the finite Chow-Witt correspondences between \(X\) and \(Y\) over \(S\), where \(T\in\mathscr{A}_S(X,Y)\). For any \(f\in\widetilde{Cor}_S(X,Y)\) and \(g\in\widetilde{Cor}_S(Y,Z)\), we can define \(g\circ f\in\widetilde{Cor}_S(X,Z)\) as in \cite[4.2]{CF}. This produces an additive category (see \cite[Proposition 5.1]{Y} and \cite[Proposition 5.5]{Y} for complete details) \(\widetilde{Cor}_S\) whose objects are the same as in \(Sm/S\) and whose morphisms are defined above. There is a functor \(\widetilde{\gamma}:Sm/S\longrightarrow\widetilde{Cor}_S\) sending a morphism to its graph, more precisely, to the push-forward of the identity along the graph morphism (see \cite[4.3]{CF}, \cite[Definition 5.3]{Y}).

We define a presheaf with MW-transfers to be a contravariant additive functor from \(\widetilde{Cor}_S\) to \(Ab\). It's a sheaf with MW-transfers if it's a Nisnevich sheaf after restricting to \(Sm/S\) via \(\widetilde{\gamma}\). For any smooth scheme $X$, let $\widetilde{c}_S(X)$ be the representable presheaf with MW-transfers of \(X\).

Let  \(\widetilde{PSh}(S)\) be the category of presheaves with MW-transfers over \(S\) and let \(\widetilde{Sh}(S)\) be the full subcategory of sheaves with MW-transfers (see \cite[Definition 1.2.1 and Definition 1.2.4]{DF}). Both categories are abelian and have enough injectives (\cite[\S 1.1, Proposition 1.2.11]{DF}). There is an adjunction (see \cite[Proposition 1.2.11]{DF})
\[\widetilde{a}:\widetilde{PSh}(S)\rightleftharpoons\widetilde{Sh}(S):\widetilde{O},\]
where \(\widetilde{a}\) is the sheafication functor and \(\widetilde{O}\) is the forgetful functor. We set 
\[\widetilde{\mathbb{Z}}_S(X)=\widetilde{a}(\widetilde{c}_S(X)).\]

For any \(F\in\widetilde{PSh}(S)\) and \(T\in Sm/S\), we define a presheaf with MW-transfers  \(F^T\)  by \(F^T(X)=F(X\times_S T)\) following \cite[Exercise 2.9]{MVW}. For any \(f:T_1\longrightarrow T_2\) in \(\widetilde{Cor}_S\), we have a morphism \(F^f:F^{T_2}\longrightarrow F^{T_1}\) induced by the tensor product of correspondences (see \cite[4.4]{CF} and \cite[Definition 5.7]{Y}). It's clear that if \(F\) is a sheaf with MW-transfers, \(F^T\) is a sheaf with MW-transfers as well. For any presheaf with MW-transfers $F$, we define a complex \(C_*F\) with \((C_*F)_n=F^{\triangle^n}, n\geq 0\) with usual boundary maps for (co-)simplicial complexes, where \(\triangle^n\) is the algebraic \(n\)-simplex (see \cite[Definition 2.14]{MVW} for details).

The following definitions comes from \cite[Lemma 2.1]{SV}.
\begin{definition}
Let $n\geq 2$ and let  \(F_i, G\in\widetilde{PSh}(S)\) for \(i=1,\cdots,n\). A multilinear function \(\varphi:F_1\times\cdots\times F_n\longrightarrow G\) is a collection of multilinear maps of abelian groups
\[\varphi_{(X_1,\cdots,X_n)}:F_1(X_1){\times\cdots\times} F_n(X_n)\longrightarrow G(X_1\times_S\cdots\times_S X_n)\]
for every \(X_i\in Sm/S\), such that for every \(f\in\widetilde{Cor}_S(X_i,X_i')\), we have a commutative diagram
\[
	\xymatrix
	{
		\cdots\times F_i(X_i')\times\cdots\ar[r]^-{\varphi(\cdots,X_i',\cdots)}\ar[d]_{\cdots\times F(f)\times\cdots}	&G(\cdots\times_S X_i'\times_S\cdots)\ar[d]_{G(\cdots\times f\times\cdots)}\\
		\cdots\times F_i(X_i)\times\cdots\ar[r]^-{\varphi(\cdots,X_i,\cdots).}								&G(\cdots\times_S X_i\times_S\cdots)
	}
\]
\end{definition}

\begin{definition}
Let $n\geq 2$ be an integer and let \(F_i, G\in\widetilde{PSh}(S)\) (resp. \(\widetilde{Sh}(S)\)) for \(i=1,\cdots,n\). The tensor product \(F_1\otimes_S^{pr}\cdots\otimes_S^{pr}F_n\) (resp. \(F_1\otimes_S\cdots\otimes_SF_n\)) is the presheaf (resp. sheaf) with MW-transfers \(G\) such that for any \(H\in\widetilde{PSh}(S)\) (resp. \(\widetilde{Sh}(S)\)), we have
\[Hom_S(G,H)\cong\{\textrm{Multilinear functions }F_1\times\cdots\times F_n\longrightarrow H\}\]
naturally.
\end{definition}
For any \(F, G\in\widetilde{PSh}(S)\), we can define \(F\otimes^{pr}_SG\in\widetilde{PSh}(S)\) as in the discussion before \cite[Lemma 2.1]{SV}, which has the universal property above. Moreover, we define \(\underline{Hom}_S(F,G)\) to be the presheaf with MW-transfers which sends \(X\in Sm/S\) to \(Hom_S(F,G^X)\). And if they are sheaves with MW-transfers, we define \(F\otimes_SG=\widetilde{a}(F\otimes^{pr}_SG)\). If \(G\) is a sheaf with MW-transfers, it's clear that \(\underline{Hom}_S(F,G)\) is also a sheaf with MW-transfers. Finally, it's clear that \(\otimes^{pr}_S\) (resp. \(\otimes_S\)) gives \(\widetilde{PSh}(S)\) (resp. \(\widetilde{Sh}(S)\)) a symmetric monoidal structure.
\begin{proposition}\label{hom-tensor}
For any \(F,G,H\in\widetilde{PSh}(S)\), we have isomorphisms
\[Hom_S(F\otimes^{pr}_SG,H)\cong Hom_S(F,\underline{Hom}_S(G,H)),\]
\[Hom_S(F\otimes^{pr}_SG,H)\cong Hom_S(G,\underline{Hom}_S(F,H))\]
being functorial in three variables. Similarly, for any \(F,G,H\in\widetilde{Sh}(S)\), we have isomorphisms
\[Hom_S(F\otimes_SG,H)\cong Hom_S(F,\underline{Hom}_S(G,H)),\]
\[Hom_S(F\otimes_SG,H)\cong Hom_S(G,\underline{Hom}_S(F,H))\]
being functorial in three variables.
\end{proposition}
\begin{proof}
This is clear from the definition of the bilinear map.
\end{proof}
\begin{proposition}\label{sheafication}
If a morphism \(f:F_1\longrightarrow F_2\) of presheaves with MW-transfers becomes an isomorphism after sheafifying, then so does the morphism \(f\otimes^{pr}_SG\) for any presheaf with MW-transfers \(G\).
\end{proposition}
\begin{proof}
The condition is equivalent to the map \(Hom_S(f,H)\) is an isomorphism between abelian groups for any sheaf with MW-transfers \(H\). And
\[Hom_S(f\otimes^{pr}_SG,H)\cong Hom_S(f,\underline{Hom}_S(G,H))\]
by the proposition above.
\end{proof}

For every \(f\in\widetilde{Cor}_S(X,Y)\), there is a natural map \(\widetilde{\mathbb{Z}}_S(f):\widetilde{\mathbb{Z}}_S(X)\longrightarrow\widetilde{\mathbb{Z}}_S(Y)\) induced by \(f\). For every \(X\in Sm/S\) and \(x:S\longrightarrow X\), we say that the pair \((X,x)\) is a pointed scheme. We define \(\widetilde{\mathbb{Z}}_S((X_1,x_1)\wedge\ldots\wedge(X_n,x_n))\) for pointed schemes \((X_i,x_i)\) as the cokernel of the map
\[\xymatrixcolsep{8pc}\xymatrix{\theta_n:\oplus_i\widetilde{\mathbb{Z}}_S(X_1\times_S\ldots\times_S\widehat{X_i}\times_S\ldots\times_S X_n)\ar[r]^-{\sum(-1)^{i-1}id\times\ldots\times x_i\times\ldots\times id}&\widetilde{\mathbb{Z}}_S(X_1\times_S\ldots\times_S X_n)}.\]
We denote \(\widetilde{\mathbb{Z}}_S((X,x)\wedge\ldots\wedge(X,x))\) by \(\widetilde{\mathbb{Z}}_S((X,x)^{\wedge n})\), \(\widetilde{\mathbb{Z}}_S((X,x))\) by \(\widetilde{\mathbb{Z}}_S((X,x)^{\wedge1})\) and \(\widetilde{\mathbb{Z}}_S(\textrm{Spec }k)\) by \(\widetilde{\mathbb{Z}}_S((X,x)^{\wedge0})\).

As usual, we define \(\widetilde{\mathbb{Z}}_S(q)=C_*\widetilde{\mathbb{Z}}_S(\mathbb{G}_m^{\wedge q})[-q]\) (\(\mathbb{G}_m:=(\mathbb{A}^1\setminus 0,1)\)) for \(q\geq 0\) and further we set \(\widetilde{\mathbb{Z}}_S=\widetilde{\mathbb{Z}}_S(0)\).

Now suppose \(f:S\longrightarrow T\) is a morphism in \(Sm/k\). For every \(F\in\widetilde{Sh}(S)\), we define \(f_*F\) by \((f_*F)(Y)=F(Y\times_TS)\) for any \(Y\in Sm/T\).
\begin{proposition}
\begin{enumerate}
\item For any \(X, Y\in Sm/S\), we have
\[\widetilde{\mathbb{Z}}_S(X)\otimes_S\widetilde{\mathbb{Z}}_S(Y)\cong\widetilde{\mathbb{Z}}_S(X\times_S Y).\]
\item There is an adjoint pair
\[f^*:\widetilde{Sh}(T)\rightleftharpoons\widetilde{Sh}(S):f_*,\]
where \(f^*\) is monoidal and for any \(Y\in Sm/T\), \(f^*\widetilde{\mathbb{Z}}_T(Y)=\widetilde{\mathbb{Z}}_S(Y\times_ST)\).
\item If \(f\) is smooth, there is an adjoint pair
\[f_{\#}:\widetilde{Sh}(T)\rightleftharpoons\widetilde{Sh}(S):f^*,\]
where for any \(X\in Sm/S\), \(f_{\#}\widetilde{\mathbb{Z}}_S(X)=\widetilde{\mathbb{Z}}_T(X)\) and for any \(F\in\widetilde{Sh}(S)\) and \(G\in\widetilde{Sh}(T)\), we have
\[f_{\#}(F\otimes_Sf^*G)\cong(f_{\#}F)\otimes_TG.\]
\end{enumerate}
\end{proposition}
\begin{proof}
\begin{enumerate}
\item By using Proposition \ref{sheafication} since we have \(\widetilde{c}_S(X)\otimes_S\widetilde{c}_S(Y)\cong\widetilde{c}_S(X\times_S Y)\).
\item See \cite[2.5.2]{D1}.
\item See \cite[2.5.3 and Lemma 2.17]{D1} or \cite[Proposition 5.25]{Y}.
\end{enumerate}
\end{proof}
\begin{proposition}\label{MV}
Let \(X\in Sm/S\) and \(U_1\cup U_2=X\) be a Zariski covering. Then, we have an exact sequence of sheaves with MW-transfers:
\[0\longrightarrow\widetilde{\mathbb{Z}}_S(U_1\cap U_2)\longrightarrow\widetilde{\mathbb{Z}}_S(U_1)\oplus\widetilde{\mathbb{Z}}_S(U_2)\longrightarrow\widetilde{\mathbb{Z}}_S(X)\longrightarrow 0.\]
\end{proposition}

\begin{proof}
The proof of \cite[Proposition 6.14]{MVW} applies, replacing \cite[Proposition 6.12]{MVW} by \cite[Lemma 1.2.6]{DF} or \cite[Proposition 5.10]{Y}.
\end{proof}

\begin{definition}\label{thomdef}
Let \(X\in Sm/S\) and \(Y\subseteq X\) be a closed subset. Consider the quotient sheaf with MW-transfers
\[\widetilde{M}_Y(X):=\widetilde{\mathbb{Z}}_S(X)/\widetilde{\mathbb{Z}}_S(X\setminus Y).\]
It's called the relative motive of \(X\) with support in \(Y\) (see \cite[Definition 2.2]{D} and the remark before \cite[Corollary 5.3]{SV}). If \(E\) is a vector bundle over a \(X\), we define the Thom space \(Th_S(E)\) of \(E\) as \(\widetilde{M}_X(E)\) via the zero section.
\end{definition}
\begin{proposition}\label{excision}
(\'Etale excision, see \cite[Lemma 4.11]{SV}) Let \(f:X\longrightarrow Y\) be an \'etale morphism in \(Sm/S\), \(Z\subseteq Y\) be a closed subset of \(Y\) such that the map \(f:f^{-1}(Z)\longrightarrow Z\) is an isomorphism (the schemes are endowed with their reduced structure), then the map \(\widetilde{M}_{f^{-1}(Z)}(X)\longrightarrow\widetilde{M}_Z(Y)\) is an isomorphism of sheaves with MW-transfers.
\end{proposition}
\begin{proof}
By the condition given, we get a Nisnevich covering \(f\amalg id:X\amalg(Y\setminus Z)\longrightarrow Y\) of \(Y\). So we have a commutative diagram with exact (after sheafications) rows and columns by \cite[Lemma 1.2.6]{DF}:
\[
	\xymatrix
	{
																		&0\ar[d]										&0\ar[d]										&\\
																		&\widetilde{c}_S(Y\setminus Z)\ar@{=}[r]\ar[d]			&\widetilde{c}_S(Y\setminus Z)\ar[d]					&\\
		\widetilde{c}_S((X\amalg(Y\setminus Z))\times_Y(X\amalg(Y\setminus Z)))\ar[r]\ar[rd]^-r	&\widetilde{c}_S(X\amalg(Y\setminus Z))\ar[r]^-{p}\ar[d]	&\widetilde{c}_S(Y)\ar[r]\ar[d]						&0\\
																		&\widetilde{c}_S(X)\ar[r]^-q\ar[d]					&\widetilde{c}_S(Y)/\widetilde{c}_S(Y\setminus Z)\ar[d]\ar[r]	&0\\
																		&0											&0											&
	}.
\]

We want to show that \(ker(q)=\widetilde{c}_S(X\setminus f^{-1}(Z))\) after sheafication yielding the statement.

We clearly have \(\widetilde{c}_S(X\setminus f^{-1}(Z))\subseteq ker(q)\) and \(r\) maps onto \(ker(q)\) after sheafication. So it suffices to show that \(Im(r)\subseteq\widetilde{c}_S(X\setminus f^{-1}(Z))\). The sheaf \(\widetilde{c}_S((X\amalg(Y\setminus Z))\times_Y(X\amalg(Y\setminus Z)))\) is decomposed into four direct components
\[\widetilde{c}_S(X\times_YX),\widetilde{c}_S(X\times_Y(Y\setminus Z)),\widetilde{c}_S((Y\setminus Z)\times_YX),\widetilde{c}_S((Y\setminus Z)\times_Y(Y\setminus Z))\]
via disjoint unions so we just have to calculate their images under \(r\) respectively. The calculations for last three components are easy and we only explain the computation of the first one.

We have a Cartesian square
\[
	\xymatrix
	{
		X\times_YX\ar[r]^-{p_1}\ar[d]_{p_2}\ar[rd]^{\pi}	&X\ar[d]_f\\
		X\ar[r]^{f}								&Y.
	}
\]
Then for any \(x\in\pi^{-1}(Z)\), \(p_1(x)=p_2(x)\) and the morphisms \(k(p_1(x))\longrightarrow k(x)\) induced by \(p_1\) and \(p_2\) are equal since \(f^{-1}(Z)\cong Z\). So by \cite[Corollary 3.13]{M}, \(p_1=p_2\) on the connected component containing \(x\). Hence \(p_1=p_2\) on a closed and open set \(U\) containing \(\pi^{-1}(Z)\). Again, \(\widetilde{c}_S(X\times_YX)=\widetilde{c}_S(U)\oplus\widetilde{c}_S(U^c)\). So we have \(r|_{\widetilde{c}_S(U)}=0\) and \(Im(r|_{\widetilde{c}_S(U^c)})\subseteq\widetilde{c}_S(X\setminus f^{-1}(Z))\). So we have proved that \(Im(r)\subseteq\widetilde{c}_S(X\setminus f^{-1}(Z))\).
\end{proof}

\subsection{Motivic complexes}

Suppose \(S\in Sm/k\). Let \(D^-(S)\) (resp. \(D(S)\)) be the derived category of the category $C^-(\widetilde{Sh}(S))$ of bounded above (resp. unbounded) complexes of sheaves with MW-transfers (\cite[\S 10.4]{W}) with the classical trianglated structure. The category \(D(S)\) is also the homotopy category of the model structure in \cite[Theorem 1.7]{CD} with the triangulated structure induced by the model structure.

{The} following proposition summarizes what we need about (Nisnevich) hypercohomology. 

\begin{proposition}\label{prop:relative}
For any $C\in D^-(S)$ and any $i\in \mathbb{N}$, we have an isomorphism of functors {$Sm/S\to \mathcal{A}b$}
\[Hom_{D^-(S)}(\widetilde{\mathbb{Z}}_S(-),C[i])\cong\mathbb{H}^i(-,C).\]
Further, let $X\in Sm/S$, $Z\subset X$ be a closed subset and $U=X\setminus Z$. Then, we have an isomorphism of functors  $D^{-}(S)\to  \mathcal{A}b$
\[Hom_{D^-(S)}(\widetilde{\mathbb{Z}}_S(X)/\widetilde{\mathbb{Z}}_S(U),-[i])\cong\mathbb{H}^i_Z(X,-).\]
\end{proposition}

\begin{proof}
See for example, \cite[Exercise 13.5]{MVW}.
\end{proof}

The following definition comes from \cite[Definition 9.2]{MVW}.
\begin{definition}
Define \(\mathscr{E}_{\mathbb{A}}\) to be the smallest thick subcategory of \(D^-(S)\) (resp. \(D(S)\)) such that
\begin{enumerate}
\item\(Cone(\widetilde{\mathbb{Z}}_S(X\times_k\mathbb{A}^1)\longrightarrow\widetilde{\mathbb{Z}}_S(X))\in\mathscr{E}_{\mathbb{A}}.\)
\item\(\mathscr{E}_{\mathbb{A}}\) is closed under arbitrary direct sums if it exists in \(D^-(S)\) (resp. \(D(S)\)).
\end{enumerate}
Set \(W_{\mathbb{A}}\) to be the class of morphisms in \(D^-(S)\) (resp. \(D(S)\)) whose cone is in \(\mathscr{E}_{\mathbb{A}}\). Define
\[\widetilde{DM}^{eff,-}(S)=D^-(S)[W_{\mathbb{A}}^{-1}]\]
\[\textrm{(resp. }\widetilde{DM}^{eff}(S)=D(S)[W_{\mathbb{A}}^{-1}]\textrm{)}\]
to be the category of effective MW-motives, via Verdier localization (see \cite[4.6]{Kra}). The morphisms in \(D^-(S)\) (resp. \(D(S)\)) becoming isomorphisms after localization by \(W_{\mathbb{A}}\) are called \(\mathbb{A}^1\)-weak equivalences.
\end{definition}
Here, slightly abusing notation, we still denote by $\widetilde{\mathbb{Z}}_S(X)$ the class of $\widetilde{\mathbb{Z}}_S(X)$ (seen as a complex concentrated in degree $0$) in this category. 
\begin{proposition}\label{embedding}
The (naively defined) functor
\[
D^-(S)\to D(S)
\]
induces an exact functor 
\[
\widetilde{DM}^{eff,-}(S)\to \widetilde{DM}^{eff}(S).
\]
which is fully faithful if \(S=pt\).
\end{proposition}

\begin{proof}
The functor \(D^-(S)\longrightarrow D(S)\) is exact by \cite[Proposition 6.7]{Y}. The functor \(\widetilde{DM}^{eff,-}(S)\longrightarrow\widetilde{DM}^{eff}(S)\) is induced and exact by the universal property of the Verdier localization (see \cite[Proposition 4.6.2]{Kra}).

Now suppose \(X,Y\in\widetilde{DM}^{eff,-}(pt)\). We have a commutative diagram
\[
	\xymatrix
	{
		Hom_{\widetilde{DM}^{eff,-}(pt)}(X,Y)\ar[r]^-{\cong}_-u\ar[d]_{\alpha}	&Hom_{\widetilde{DM}^{eff,-}(pt)}(C_*X,C_*Y)\ar[d]	&Hom_{D^-(pt)}(C_*X,C_*Y)\ar[l]_-{\gamma}\ar[d]_{\cong}\\
		Hom_{\widetilde{DM}^{eff}(pt)}(X,Y)\ar[r]	^-{\cong}_-v				&Hom_{\widetilde{DM}^{eff}(pt)}(C_*X,C_*Y)		&Hom_{D(pt)}(C_*X,C_*Y)\ar[l]_-{\beta}
	},
\]
where \(u, v\) are induced by the natural morphisms \(X\longrightarrow C_*X\) and \(Y\longrightarrow C_*Y\) and \(\beta, \gamma\) are isomorphisms by \cite[Lemma 9.19]{MVW} and \cite[Corollary 3.2.11]{DF}. It follows that \(\alpha\) is bijective.
\end{proof}
\begin{definition}
We say that a presheaf with MW-transfers is \emph{free} if it's a direct sum of sheaves of the form \(\widetilde{c}_S(X)\). If a presheaf with MW-transfers is a direct summand of a free presheaf with MW-transfers, we say it's projective. A sheaf with MW-transfers is called free (resp. projective) if it's a sheafication of a free (resp. projective) presheaf with MW-transfers. A bounded above complex of sheaves with MW-transfers is called free (resp. projective) if all its term are free (resp. projective).
\end{definition}
\begin{definition}
A projective resolution of a bounded above complex of sheaves \(K\) is a projective complex (of sheaves) with a quasi-isomorphism \(P\longrightarrow K\).
\end{definition}

In the definition above, if \(K\) is already projective we may take \(P=K\).

\begin{proposition}\label{derived2}
\begin{enumerate}
\item There is a tensor product
\[\begin{array}{cccccc}\otimes_S:&\widetilde{DM}^{eff,-}(S)&\times&\widetilde{DM}^{eff,-}(S)&\longrightarrow&\widetilde{DM}^{eff,-}(S)\\&(K&,&L)&\longmapsto&P\otimes_SQ\end{array},\]
where \(P, Q\) are projective resolutions of \(K, L\) respectively, and $P\otimes_SQ$ is the total complex of the bicomplex \(\{P_i\otimes_SQ_j\}\). Furthermore, for any \(K\in\widetilde{DM}^{eff,-}(S)\), the functor \(K\otimes_S-\) is exact.
\item Suppose that \(f:S\longrightarrow T\) is a smooth morphism in \(Sm/k\). There is an exact functor
\[f_{\#}:\widetilde{DM}^{eff,-}(S)\longrightarrow\widetilde{DM}^{eff,-}(T)\]
defined on objects by $K\mapsto  f_{\#}P$, where \(P\) is a projective resolution of \(K\).
\item Suppose that \(f:S\longrightarrow T\) is a morphism in \(Sm/k\). There is an exact functor
\[f^*:\widetilde{DM}^{eff,-}(T)\longrightarrow\widetilde{DM}^{eff,-}(S)\]
defined on objects by $K\mapsto f^*P$, where \(P\) is a projective resolution of \(K\). Moreover, if \(f\) is smooth, there is an adjunction
\[f_{\#}:\widetilde{DM}^{eff,-}(S)\rightleftharpoons\widetilde{DM}^{eff,-}(T):f^*.\]
\end{enumerate}
\end{proposition}
\begin{proof}
See \cite[Proposition 6.12 and Proposition 6.13]{Y}.
\end{proof}
\begin{coro}\label{operations1}
Let \(f:S\longrightarrow T\) be a morphism in \(Sm/k\).
\begin{enumerate}
\item For any \(K, L\in\widetilde{DM}^{eff,-}(T)\), we have
\[f^*(K\otimes_TL)\cong(f^*K)\otimes_S(f^*L).\]
\item If \(f\) is smooth, then for any \(K\in\widetilde{DM}^{eff,-}(S)\) and \(L\in\widetilde{DM}^{eff,-}(T)\), we have
\[f_{\#}(K\otimes_Sf^*L)\cong(f_{\#}K)\otimes_TL.\]
\end{enumerate}
\end{coro}
By \cite[Example 3.15]{CD}, the categories \(\widetilde{DM}^{eff}(S)\) also have operations \(\otimes,f^*,f_{\#}\), where projective resolutions are replaced by cofibrant resolutions. These operations have the same properties as above. By \cite[Proposition 6.6]{Y}, \(\otimes,f^*,f_{\#}\) are compatible (up to a natural isomorphism) with the embedding functor \(\widetilde{DM}^{eff,-}(S)\to \widetilde{DM}^{eff}(S)\) defined in Proposition \ref{embedding}.
\begin{proposition}\label{zarsep}
Let \(X\in Sm/k\), \(\{U_i\}\) be an open covering of \(X\) and \(f\) is a morphism in {\(\widetilde{DM}^{eff,-}(X)\)}. If \(f|_{U_i}\) is an isomorphism for every \(i\), then \(f\) is an isomorphism.
\end{proposition}
\begin{proof}
We may assume the index \(i\) is finite. By the condition, \(Cone(f)|_{U_i}=0\) in \(\widetilde{DM}^{eff,-}(U_i)\) for every \(i\). Set \(C=Cone(f)\), we are going to prove that \(C=0\) in \(\widetilde{DM}^{eff,-}(X)\). For any \(Y\in Sm/U_i\) and \(n\in\mathbb{Z}\), we have
\[Hom_{\widetilde{DM}^{eff,-}(U_i)}(\widetilde{\mathbb{Z}}_{U_i}(Y)[n],C|_{U_i})=Hom_{\widetilde{DM}^{eff,-}(X)}(\widetilde{\mathbb{Z}}_X(Y)[n],C)=0\]
by adjuction. {Hence by a Mayer-Vietoris argument, we see that the equation above holds for any \(Y\in Sm/X\). Hence \(C=0\) in \(\widetilde{DM}^{eff,-}(X)\) by \cite[Lemma 9.4]{MVW}.}
\end{proof}
\begin{definition}
We say that a morphism \(p:E\longrightarrow X\) in \(Sm/S\) is an \(\mathbb{A}^n\)-bundle if there is an open covering \(\{U_i\}\) of \(X\) such that \(p^{-1}(U_i)\cong U_i\times_k\mathbb{A}^n\).
\end{definition}
\begin{proposition}\label{homotopy invariance}
Let \(p:E\longrightarrow X\) be an \(\mathbb{A}^n\)-bundle and \(X\in Sm/S\). Then, the map 
\[\widetilde{\mathbb{Z}}_S(p):\widetilde{\mathbb{Z}}_S(E)\longrightarrow\widetilde{\mathbb{Z}}_S(X)\] 
is an isomorphism in \(\widetilde{DM}^{eff,-}(S)\).
\end{proposition}
\begin{proof}
Follows by \(\mathbb{A}^1\)-invariance and Proposition \ref{MV}.
\end{proof}

\begin{proposition}\label{deformation to the normal bundle}
(Homotopy Purity) Let \(X\in Sm/S\) and \(Y\subseteq X\) be a smooth closed subscheme. Then
\[\widetilde{M}_Y(X)\cong Th_S(N_{Y/X})\]
in \(\widetilde{DM}^{eff,-}(S)\).
\end{proposition}
\begin{proof}See \cite[Theorem 2.2.8]{P}. Alternatively, one may use \cite[\S 3, Theorem 2.23]{MV} and the sequence of functors of \cite[\S 3.2.4.a]{DF}.
\end{proof}
\begin{proposition}\label{ext}
Suppose \(X_i\in Sm/S\) and \(U_i=X_i\setminus Z_i (i=1,2)\) are open in \(X_i\). Then we have the exterior product
\[\times:\widetilde{M}_{Z_1}(X_1)\otimes_S\widetilde{M}_{Z_2}(X_2)\cong\widetilde{M}_{Z_1\times_S Z_2}(X_1\times_S X_2)\]
in \(\widetilde{DM}^{eff,-}(S)\).
\end{proposition}
\begin{proof}
By Proposition \ref{MV}, the following morphism between complexes is a quasi-isomorphism
\[
	\xymatrix
	{
		\widetilde{\mathbb{Z}}_S(U_1\times_S U_2)\ar[r]\ar[d]	&\widetilde{\mathbb{Z}}_S(U_1\times_S X_2)\oplus{\widetilde{\mathbb{Z}}_S}(U_2\times_S X_1)\ar[r]\ar[d]	&\widetilde{\mathbb{Z}}_S(X_1\times_S X_2)\ar@{=}[d]\\
		0\ar[r]							&\widetilde{\mathbb{Z}}_S((Z_1\times_S Z_2)^c)\ar[r]								&\widetilde{\mathbb{Z}}_S(X_1\times_S X_2)
	},
\]
where the first row (resp. second row) is the left hand side (resp. right hand side) of the statement.
\end{proof}
Now in the notation above, suppose \(X_1=X_2=X\). If we have two maps \(f_i:\widetilde{M}_{Z_i}(X)\longrightarrow C_i\), \(i=1,2\) in \(\widetilde{DM}^{eff,-}(S)\) we define \(f_1\boxtimes f_2\) as the composite
\[\xymatrix{\widetilde{M}_{Z_1\cap Z_2}(X)\ar[r]^-{\triangle}&\widetilde{M}_{Z_1\times_S Z_2}(X\times_S X)\ar[r]^-{\times}&\widetilde{M}_{Z_1}(X)\otimes_S\widetilde{M}_{Z_2}(X)\ar[r]^-{f_1\otimes f_2}&C_1\otimes_S C_2}.\]
\begin{coro}\label{additivity}
\begin{enumerate}
\item Suppose that \(f:S\longrightarrow T\) is a morphism in \(Sm/k\), that \(X\in Sm/T\) and that \(E\) is a vector bundle over \(X\). Then we have
\[f^*Th_T(E)\cong Th_S(f^*E)\]
in \(\widetilde{DM}^{eff,-}(S)\), where \(f^*E\) the pull-back of \(E\) from \(X\) to \(X\times_TS\) via \(f\).
\item Suppose that \(f:S\longrightarrow T\) is a smooth morphism in \(Sm/k\), that \(X\in Sm/S\) and that \(E\) is a vector bundle over \(X\). Then we have
\[f_{\#}Th_S(E)\cong Th_T(E)\]
in \(\widetilde{DM}^{eff,-}( T)\).
\item (\cite[Remark 2.4.15]{CD1}) Suppose \(E_1\) and \(E_2\) are vector bundles over \(X\in Sm/k\). Then
\[Th_X(E_1)\otimes_XTh_X(E_2)\cong Th_X(E_1\oplus E_2)\]
in \(\widetilde{DM}^{eff,-}(X)\).
\end{enumerate}
\end{coro}
\begin{proposition}\label{punctured affine space}
For any $n\in\mathbb{N}$ and \(S\in Sm/k\), we have an isomorphism
\[t_n:\widetilde{M}_{0\times S}(\mathbb{A}^{n}\times S)\cong\widetilde{\mathbb{Z}}_S(n)[2n]\]
in \(\widetilde{DM}^{eff,-}(S)\), where \(t_n=t_1^{\times n}\).
\end{proposition}
\begin{proof}The statement is trivial if \(n=0, 1\). Now for any \(n,m\in\mathbb{N}\), we have
\[\widetilde{M}_{{0\times S}}(\mathbb{A}^{n}\times S)\otimes_S\widetilde{M}_{{0\times S}}(\mathbb{A}^{m}\times S)\cong\widetilde{M}_{{0\times S}}(\mathbb{A}^{n+m}\times S)\]
by Proposition \ref{ext}.
\end{proof}
\begin{definition}\label{CPW}
For any \(X\in Sm/k\), \(Z\) closed in \(X\), \(i\in\mathbb{N}\) and \(j\in\mathbb{Z}\), we define
\[A_Z^{j,i}(X)=Hom_{\widetilde{DM}^{eff,-}(pt)}(\widetilde{M}_Z(X),\widetilde{\mathbb{Z}}_{pt}(i)[j])\]
where we define \(A^{j,i}(X)=A_X^{j,i}(X)\). Then for any \(Z_1, Z_2\) closed in \(X\), \(i_1,i_2\in\mathbb{N}\) and \(j_1,j_2\in\mathbb{Z}\), we have a product
{\[\xymatrix{A_{Z_1}^{j_1,i_1}(X)\times A_{Z_2}^{j_2,i_2}(X)\ar[r]^-{\boxtimes}&A_{Z_1\cap Z_2}^{j_1+j_2,i_1+i_2}(X)},\]}
which satisfies the axioms given in \cite[Definition 2.1 and Definition 2.2]{PW} by Proposition \ref{excision}.

\end{definition}
\begin{proposition}\label{base}
Let \(X\) be a smooth scheme, \(Z\subseteq X\) be a closed subset and \(i\geq 0\). Then
\[\mathbb{H}^{2i}_Z(X,\widetilde{\mathbb{Z}}_{pt}(i))\cong\widetilde{CH}^i_Z(X),\]
and in particular
\[\mathbb{H}^{2i}(X,\widetilde{\mathbb{Z}}_{pt}(i))\cong\widetilde{CH}^i(X)\]
functorially in $X$.
Moreover, the following diagram commutes for any \(i,j\geq 0\)
\[
	\xymatrix@C=1.0em
	{
		A^{2i,i}(X)\times A^{2j,j}(X)\ar[r]\ar[d]^{\boxtimes}	&\widetilde{CH}^i(X)\times\widetilde{CH}^j(X)\ar[d]^{\cdot}\\
		A^{2(i+j),i+j}(X)\ar[r]															&\widetilde{CH}^{i+j}(X)
	}
\]
where the right-hand map is the intersection product on Chow-Witt groups.
Consequently, we have isomorphisms \(A^{2i,i}(X)\longrightarrow\widetilde{CH}^i(X)\) which send the natural embedding \(j:\widetilde{\mathbb{Z}}_{pt}(pt)\longrightarrow\widetilde{\mathbb{Z}}_{pt}\) to \(1\) when \(i=0\) and \(X=pt\).
\end{proposition}
\begin{proof}
See \cite[Corollary 4.2.6]{DF}.
\end{proof}
\begin{proposition}\label{cancellation}
Let \(K, L\in\widetilde{DM}^{eff,-}(pt)\). The map
\[\xymatrix{Hom_{\widetilde{DM}^{eff,-}(pt)}(K,L)\ar[r]^-{\otimes_{pt}\widetilde{\mathbb{Z}}_{pt}(i)}&Hom_{\widetilde{DM}^{eff,-}(pt)}(K(i),L(i))}, i>0\]
is an isomorphism.
\end{proposition}
\begin{proof}See \cite[Theorem 5.0.1]{FO} by using Proposition \ref{embedding}.
\end{proof}
\begin{lemma}\label{orthogonality}
Let \(X\in Sm/k\) and let \(i,j\geq 0\). Then
\[
Hom_{\widetilde{DM}^{eff,-}(pt)}(\widetilde{\mathbb{Z}}_{pt}(X)(i)[2i],\widetilde{\mathbb{Z}}_{pt}(j)[2j])=\begin{cases} 0 & \text{ if $i>j$.} \\ \widetilde{CH}^{j-i}(X) & \text{ if $i\leq j$.} \end{cases} 
\]
\end{lemma}

\begin{proof}
If \(i\leq j\), the lemma follows from Propositions \ref{base} and  \ref{cancellation}. Suppose then that $i>j$. Tensoring the isomorphism in Proposition \ref{punctured affine space} with \(\widetilde{\mathbb{Z}}_{pt}(X)\), we get 
\[
\widetilde{\mathbb{Z}}_{pt}(X)(i)[2i]\simeq \widetilde{\mathbb{Z}}_{pt}(X\times \mathbb{A}^i)/\widetilde{\mathbb{Z}}_{pt}(X\times (\mathbb{A}^i\setminus 0)).
\]
Then it follows from Propositions \ref{prop:relative} and \ref{base} that 
\[
Hom_{\widetilde{DM}^{eff,-}(pt)}(\widetilde{\mathbb{Z}}_{pt}(X)(i)[2i],\widetilde{\mathbb{Z}}_{pt}(j)[2j])\simeq \widetilde{CH}^{j}_{X\times 0}(X\times \mathbb{A}^i)=0.
\]
\end{proof}

\begin{coro}\label{cor:orthogonality}
For any $i,j\geq 0$, we have 
\[
Hom_{\widetilde{DM}^{eff,-}(pt)}(\widetilde{\mathbb{Z}}_{pt}(i)[2i],\widetilde{\mathbb{Z}}_{pt}(j)[2j])=\begin{cases} 0 & \text{ if $i\neq j$.} \\ \widetilde{CH}^{0}(k) & \text{ if $i= j$.} \end{cases} 
\]
In other terms, the motives $\widetilde{\mathbb{Z}}_{pt}(i)[2i]$ are mutually orthogonal in the triangulated category $\widetilde{DM}^{eff,-}(pt)$.
\end{coro}

\section{Quaternionic geometry}\label{Grass}
\subsection{Grassmannian bundles and quaternionic projective bundles}
First of all, we recall the basics on Grassmannian bundles and quaternionic projective bundles. Although these are well-known objects, we include the definitions here for the sake of notations. The reader may refer to \cite{KL}, \cite{S} for Grassmannians, \cite{K} for Grassmannian bundles and \cite{PW} for quaternionic projective bundles.
\begin{definition}
Let \(X\) be a \(S\)-scheme, \(\mathscr{E}\) locally free of rank \(r\) on \(X\), \(1\leq n\leq r\). Define a functor
\[
	\begin{array}{ccc}
		F:{X-Sch}^{op}		&\longrightarrow	&Set\\
		f:T\longrightarrow X	&\longmapsto		&\{\mathscr{F}\subseteq f^{*}\mathscr{E}|f^{*}\mathscr{E}/\mathscr{F}\textrm{ is locally free of rank n}\}
	\end{array}
\]
with functorial maps defined by pull-backs. If \(F\) is representable, the representative is called the Grassmannian bundle of rank \(n\) of \(\mathscr{E}\), denoted by \(Gr_{X}(n,\mathscr{E})\).
\end{definition}
\begin{proposition}
The functor \(F\) is representable. Further, if \(\mathscr{E}\cong O_{X}^{\oplus r}\), then \(Gr_{X}(n,\mathscr{E})\cong Gr(n,r)\times_{k}X\) over \(X\), where \(Gr(n,r)\) is the Grassmannian of rank \(r\) of \(k^{\oplus n}\).
\end{proposition}
\begin{proof}See \cite[Proposition 1.2]{K}.
\end{proof}

Let \(p:Gr_{X}(n,\mathscr{E})\longrightarrow X\) be the structure map. There is a universal element \(\mathscr{F}\subseteq p^{*}\mathscr{E}\) with quotient of rank $n$. The vector bundle \((p^{*}\mathscr{E}/\mathscr{F})^{\vee}\) is called the tautological bundle of \(Gr_X(n,\mathscr{E})\), denoted by \(\mathscr{U}\).  Its dual is just called the dual tautological bundle, denoted by \(\mathscr{U}^{\vee}\).
\begin{definition}
Let \(\mathscr{E}\neq 0\) be a locally free sheaf of rank \(n\) over a scheme \(X\). It is called symplectic if one equips it with a skew-symmetric \((v\cdot v=0)\) and non degenerate inner product \(m:\mathscr{E}\times\mathscr{E}\longrightarrow O_X\) (hence \(n\) is always even). 
\end{definition}

Now let \(f:X\longrightarrow Y\) be a morphism of schemes and \((\mathscr{E},m)\) be a symplectic bundle on \(Y\). Then \((f^{*}\mathscr{E},f^{*}(m))\) is also a symplectic bundle, where \(f^{*}(m)\) is the pull back of the map \(\mathscr{E}\longrightarrow\mathscr{E}^{\vee}\) induced by \(m\).

The following is a basic tool when dealing with non degeneracy of inner products.
\begin{proposition}\label{non degeneracy}
Let \(f:X\longrightarrow Y\) be a morphism between schemes and \(\mathscr{E}\) be a locally free sheaf of finite rank over \(Y\) with an inner product \(m:\mathscr{E}\times\mathscr{E}\longrightarrow O_X\). Then for any \(x\in X\), \(m\) is non degenerate at \(f(x)\) if and only if \(f^*(m)\) is non degenerate at \(x\).
\end{proposition}
\begin{proof}
This is basically because \(f\) induces local homomorphisms between stalks.
\end{proof}
\begin{proposition}\label{prop:symplectic}
Suppose we have an injection \(i:\mathscr{E}_{1}\longrightarrow\mathscr{E}_{2}\) between vector bundles, where \(\mathscr{E}_{2}\) admits a nondenerate inner product \(m\) and \({m_{\mathscr{E}_{2}}}|_{\mathscr{E}_{1}}\) is non degenerate. Define \(\mathscr{E}_{1}^{\perp}(U):={\mathscr{E}_{1}(U)}^{\perp}\) for every \(U\). Then \(\mathscr{E}_{1}^{\perp}\) is again a vector bundle with a nondegenerate inner product inherited from \(\mathscr{E}_{2}\) and there exists a unique \(p:\mathscr{E}_{2}\longrightarrow\mathscr{E}_{1}\) with \(p\circ i=id_{\mathscr{E}_{1}}\) and \(Im(id_{\mathscr{E}_{2}}-i\circ p)\subseteq\mathscr{E}_{1}^{\perp}\). In other words, \(\mathscr{E}_2\cong\mathscr{E}_1\oplus\mathscr{E}_1^{\perp}\).
\end{proposition}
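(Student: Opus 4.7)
The plan is to construct the projection $p$ canonically from the symplectic forms and deduce the remaining assertions from it. Let $\alpha:\mathscr{E}_{2}\longrightarrow\mathscr{E}_{2}^{\vee}$ and $\beta:\mathscr{E}_{1}\longrightarrow\mathscr{E}_{1}^{\vee}$ denote the maps induced by $m_{\mathscr{E}_{2}}$ and by its restriction to $\mathscr{E}_{1}$; both are isomorphisms of sheaves by the two nondegeneracy hypotheses. The natural candidate is
\[p:=\beta^{-1}\circ i^{\vee}\circ\alpha:\mathscr{E}_{2}\longrightarrow\mathscr{E}_{1}.\]

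First I would verify $p\circ i=\mathrm{id}_{\mathscr{E}_{1}}$ by noting that $i^{\vee}\circ\alpha\circ i$ is, by construction, the map $\mathscr{E}_{1}\longrightarrow\mathscr{E}_{1}^{\vee}$ associated to $m_{\mathscr{E}_{2}}|_{\mathscr{E}_{1}}$, which is exactly $\beta$. Next I would identify $\ker p$ with $\mathscr{E}_{1}^{\perp}$: a local section $v$ of $\mathscr{E}_{2}$ satisfies $p(v)=0$ iff $\alpha(v)\in\ker(i^{\vee})$, iff $m(v,i(w))=0$ for every local section $w$ of $\mathscr{E}_{1}$. Since $p\circ i=\mathrm{id}$, the short exact sequence $0\to\ker p\to\mathscr{E}_{2}\to\mathscr{E}_{1}\to 0$ is split by $i$, giving the decomposition
\[\mathscr{E}_{2}=\mathscr{E}_{1}^{\perp}\oplus i(\mathscr{E}_{1}),\]
so that $\mathscr{E}_{1}^{\perp}$ is a locally free direct summand of $\mathscr{E}_{2}$ of rank $\mathrm{rk}(\mathscr{E}_{2})-\mathrm{rk}(\mathscr{E}_{1})$.

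To see that $(\mathscr{E}_{1}^{\perp},m|_{\mathscr{E}_{1}^{\perp}})$ is symplectic, skew-symmetry is inherited for free. For nondegeneracy I would examine the induced map $\gamma:\mathscr{E}_{1}^{\perp}\longrightarrow(\mathscr{E}_{1}^{\perp})^{\vee}$: any local $v\in\mathscr{E}_{1}^{\perp}$ in $\ker\gamma$ pairs trivially with $\mathscr{E}_{1}^{\perp}$ and, by definition, with $i(\mathscr{E}_{1})$, so by the splitting it pairs trivially with all of $\mathscr{E}_{2}$, forcing $v=0$ via $\alpha$. Since source and target of $\gamma$ are locally free of the same rank, this fibrewise injectivity upgrades to an isomorphism of sheaves.

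Uniqueness of $p$: if $p'$ also satisfies the two conditions, then for every local section $v$ of $\mathscr{E}_{2}$ the section $v-i(p'(v))$ lies in $\mathscr{E}_{1}^{\perp}$, so pairing with $i(w)$ yields $m(v,i(w))=\beta(p'(v))(w)$ for every $w$; applying $\beta^{-1}$ recovers $p'(v)=p(v)$. There is no serious obstacle beyond being careful with the sheaf-theoretic formulation; the only mildly delicate point is matching $\mathscr{E}_{1}^{\perp}$, defined via the orthogonality condition at the level of sections, with the kernel of $p$ as a \emph{locally free direct summand} rather than just a subsheaf, and the construction of $p$ above is exactly what effects this identification.
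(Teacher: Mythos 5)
Your construction is correct, and it is in fact more than the paper supplies: the paper gives no proof of this proposition at all, remarking only that it ``can be seen from the case of vector spaces.'' Your formula \(p=\beta^{-1}\circ i^{\vee}\circ\alpha\) is exactly the canonical globalization of that linear algebra, and it does the real sheaf-theoretic work the paper elides: the identity \(i^{\vee}\circ\alpha\circ i=\beta\) gives \(p\circ i=\mathrm{id}\); the idempotent \(\mathrm{id}-i\circ p\) then exhibits \(\ker p=\mathscr{E}_{1}^{\perp}\) as a locally free direct summand (which is what makes \(\mathscr{E}_{1}^{\perp}\) a subbundle rather than merely a subsheaf); and the uniqueness argument is the standard one. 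The only step I would tighten is the nondegeneracy of \(m\) on \(\mathscr{E}_{1}^{\perp}\): showing \(\ker\gamma=0\) as a subsheaf does not by itself upgrade a map of locally free sheaves of equal rank to an isomorphism (multiplication by a nonunit on a trivial line bundle is injective but not surjective). You need to run the same pairing argument on fibres over each point \(x\), using that the decomposition \(\mathscr{E}_{2}=\mathscr{E}_{1}^{\perp}\oplus i(\mathscr{E}_{1})\) passes to fibres, so that \(\gamma\otimes k(x)\) is injective hence bijective, and then conclude by Nakayama. Equivalently, and more cleanly, note that \(\alpha\) carries the orthogonal decomposition of \(\mathscr{E}_{2}\) to the dual decomposition of \(\mathscr{E}_{2}^{\vee}\) block-diagonally, so \(\gamma\) is a diagonal block of the isomorphism \(\alpha\) and hence an isomorphism. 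With that adjustment the proof is complete.
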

\begin{proof}
We define \(p\) by the commuative diagram
\[
	\xymatrix
	{
		\mathscr{E}_2^{\vee}\ar[r]^{i^{\vee}}	&\mathscr{E}_1^{\vee}\\
		\mathscr{E}_2\ar[u]_{\cong}\ar[r]^{p}	&\mathscr{E}_1\ar[u]_{\cong}
	},
\]
where vertical maps are induced by inner products. It's easy to check this is what we want.
\end{proof}
Now suppose in this section \((\mathscr{E},m)\) is a symplectic bundle of rank \(2n+2\) over a scheme \(X\).

\begin{definition}\label{quaternionic projective bundle}
Define a functor
\[
	\begin{array}{ccc}
		H:{X-Sch}^{op}		&\longrightarrow	&Set\\
		f:T\longrightarrow X	&\longmapsto		&\{\mathscr{F}\subseteq f^{*}\mathscr{E}|f^{*}(m)|_{\mathscr{F}}\textrm{ non degenerate}, f^{*}\mathscr{E}/\mathscr{F}\textrm{ v.b. of rank }2n\}
	\end{array}
\]
with functorial maps defined by pull-backs. 
\end{definition}
\begin{definition}\label{HPn}
Let
\[HP^{n}=D(\sum_{i=1}^{n+1}p_{i,i+n+1})\subseteq Gr(2,2n+2),\]
where \(\{p_{i,j}\}\) are Pl\"ucker coordinates of \(Gr(2,2n+2)\). It's just the set of two dimensional subspaces of \(k^{\oplus 2n+2}\) on which the standard symplectic form \(\left(\begin{array}{cc}&I\\-I&\end{array}\right)\) is non-degenerate.
\end{definition}

\begin{proposition}\label{HGr}
The functor \(H\) is representable by a scheme \(HGr_{X}(\mathscr{E})\). Further, if \((\mathscr{E},m)\cong\left(O_{X}^{\oplus 2n+2},\left(\begin{array}{cc}&I\\-I&\end{array}\right)\right)\), then \(HGr_{X}(\mathscr{E})\cong HP^{n}\times_{k}X\) over \(X\).
\end{proposition}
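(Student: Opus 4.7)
The plan is to present $H$ as an open subfunctor of the Grassmannian functor $F$ above, with quotient rank $\mathrm{rank}(\mathscr{E})-2$ so that the $\mathscr{F}$'s parametrised have rank $2$; then, in the trivial case, to identify its defining non-vanishing condition with $\sum_{i=1}^{n+1}p_{i,i+n+1}\neq 0$, which is precisely the condition cutting out $HP^{n}$.

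First I would argue the open-subfunctor claim. For any $f\colon T\to X$, every element of $H(T)$ is \textit{a fortiori} an element of $F(T)$ with the parameter chosen as above, so $H\hookrightarrow F$. Working over the representing scheme $p\colon Gr_{X}(\mathrm{rank}\,\mathscr{E}-2,\mathscr{E})\to X$ with tautological rank-$2$ subbundle $\mathscr{T}\subseteq p^{*}\mathscr{E}$, the symplectic form $m$ induces a canonical map $\Lambda^{2}\mathscr{T}\to O$, equivalently a global section $s\in\Gamma\bigl(Gr_{X}(\mathrm{rank}\,\mathscr{E}-2,\mathscr{E}),(\det\mathscr{T})^{\vee}\bigr)$. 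Nondegeneracy of a skew form on a rank-$2$ bundle at a point amounts to non-vanishing of its Pfaffian, i.e.\ of $s$ at that point; hence $H$ is represented by the open subscheme $D(s)$, and naturality in pullbacks follows from the intrinsic construction of $s$.

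Next, to identify $HGr_{X}(\mathscr{E})$ with $HP^{n}\times_{k}X$ in the split case, I would use that all the above constructions commute with base change on $X$, reducing to $X=\mathrm{Spec}\,k$ with $\mathscr{E}=O^{\oplus 2n+2}$ carrying the standard form. The universal Grassmannian is then identified via the duality-compatible Plücker embedding with $Gr(2,2n+2)$, and writing a rank-$2$ subspace as the span of $v,w$ with Plücker coordinates $p_{ij}=v_{i}w_{j}-v_{j}w_{i}$ one computes $m(v,w)=\sum_{i=1}^{n+1}(v_{i}w_{i+n+1}-v_{i+n+1}w_{i})=\sum_{i=1}^{n+1}p_{i,i+n+1}$. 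This realises the section $s$ as the linear form $\sum_{i=1}^{n+1}\overline{p_{i,i+n+1}}$ on Plücker projective space, whose non-vanishing locus is $HP^{n}$ by definition.

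The main obstacle I anticipate is the clean formulation of the Pfaffian as a globally defined section of $(\det\mathscr{T})^{\vee}$, since on trivialising charts it is only well-defined up to units; this is resolved by phrasing everything as the dual of the canonical map $\Lambda^{2}\mathscr{T}\to O$ induced by $m$, which is manifestly intrinsic and compatible with pullback. After that, the remaining work is the routine Plücker coordinate identification sketched above.
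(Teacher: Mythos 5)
Your proposal is correct and takes essentially the same route as the paper: the paper likewise realises $HGr_{X}(\mathscr{E})$ as the open subscheme of the Grassmannian bundle $Gr_{X}(2n,\mathscr{E})$ where $\pi^{*}m$ restricted to the tautological rank-$2$ subbundle is nondegenerate. The paper's proof in fact stops at that one-line definition, so your Pfaffian-section formulation of openness and the Pl\"ucker computation identifying the split case with $D_{+}(\sum_{i}\overline{p_{i,i+n+1}})=HP^{n}$ merely supply details the paper leaves implicit.
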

\begin{proof}The result is standard. We have the structure map \(\pi:Gr_X(2n,\mathscr{E})\longrightarrow X\) and the tautological exact sequence
\[0\longrightarrow\mathscr{F}\longrightarrow\pi^*\mathscr{E}\longrightarrow\mathscr{U}^{\vee}\longrightarrow 0.\]
Define
\[HGr_X(\mathscr{E})=\{x\in Gr_X(2n,\mathscr{E})|\pi^*(m)|_{\mathscr{F}}\textrm{ is non degenerate at }x\}.\]
\end{proof}

\begin{definition}
We will call \(HGr_{X}(\mathscr{E})\) the quaternionic projective bundle of $\mathscr{E}$. 
\end{definition}
Note that any morphism $f:Y\to X$ between schemes yields a commutative diagram
\[
\xymatrix{HGr_Y(f^*\mathscr{E})\ar[r]^{f^*}\ar[d] & HGr_X(\mathscr{E})\ar[d] \\
Y\ar[r]_-f & X}.
\]

Let \(p:HGr_{X}(\mathscr{E})\longrightarrow X\) be the structure map. Then, there is a universal element \(\mathscr{F}\subseteq p^{*}\mathscr{E}\) which is just obtained by the restriction of the universal element of the Grassmannian bundle to \(HGr_X(\mathscr{E})\). The vector bundle \(\mathscr{F}\) itself is called the tautological bundle of \(HGr_X(\mathscr{E})\), denoted by \(\mathscr{U}\).  Its dual is just called the dual tautological bundle, denoted by \(\mathscr{U}^{\vee}\). We will use the same symbol \(\mathscr{U}\) for all tautological bundles defined above if there is no confusion. Note that both $\mathscr U$ and $\mathscr U^\vee$ are symplectic by Proposition \ref{prop:symplectic}. 
\subsection{Symplectic Thom structure}
Let \(X\in Sm/S\) and \((\mathscr{E},m)\) be a symplectic vector bundle of rank \(2n\) over \(X\) with total space \(E\).

Recall that, as in the discussion before \cite[Theorem 4.1]{PW}, \(O_X\oplus\mathscr{E}\oplus O_X\) is also a symplectic vector bundle with inner product \(\left(\begin{array}{ccc}0&0&1\\0&m&0\\-1&0&0\end{array}\right)\).
\begin{definition}
\begin{enumerate}
\item Define \(N^-\) by the cartesian square
\[
	\xymatrix
	{
		Gr_X(2n,\mathscr{E}\oplus O_X)\ar[r]^-{i}	&Gr_X(2n,O_X\oplus\mathscr{E}\oplus O_X)\\
		N^-\ar[u]\ar[r]					&HGr_X(O_X\oplus\mathscr{E}\oplus O_X)\ar[u]_{j}
	},
\]
where \(i\) comes from the projection \(p_{23}:O_X\oplus\mathscr{E}\oplus O_X\longrightarrow\mathscr{E}\oplus O_X\) and \(j\) is the inclusion (see Proposition \ref{HGr}).
\item Define
\[N=\{x\in Gr_X(2n,O_X\oplus\mathscr{E}\oplus O_X)|\mathscr{E}'\longrightarrow p^*(O_X\oplus\mathscr{E}\oplus O_X)\longrightarrow p^*(O_X\oplus O_X)\textrm{ iso. at x}\},\]
where \(p:Gr_X(2n,O_X\oplus\mathscr{E}\oplus O_X)\longrightarrow X\) is the structure map and 
\[0\longrightarrow\mathscr{E}'\longrightarrow p^*(O_X\oplus\mathscr{E}\oplus O_X)\longrightarrow\mathscr{E}''\longrightarrow 0\]
is the tautological exact sequence. 
Note that \(N\) is an open set of the Grassmannian \(Gr_X(2n,O_X\oplus\mathscr{E}\oplus O_X)\).
\item Define
\[V=\{x\in Gr_X(2n,\mathscr{E}\oplus O_X)|\mathscr{F}'\longrightarrow q^*(\mathscr{E}\oplus O_X)\longrightarrow q^*O_X\textrm{ is an isomorphism at x}\},\]
where \(q:Gr_X(2n,\mathscr{E}\oplus O_X)\longrightarrow X\) is the structure map and 
\[0\longrightarrow\mathscr{F}'\longrightarrow q^*(\mathscr{E}\oplus O_X)\longrightarrow\mathscr{F}''\longrightarrow 0\]
is the tautological exact sequence. As above, note that \(V\) is an open set of \(Gr_X(2n,\mathscr{E}\oplus O_X)\).
\end{enumerate}
\end{definition}

The notations of \(N^-\) and \(N\) come from \cite[Theorem 4.1]{PW}, but our treatment is slightly different.
\begin{lemma}\label{open condition}
1) Let \(T\) be an \(X\)-scheme and \(f:T\longrightarrow Gr_X(2n,O_X\oplus\mathscr{E}\oplus O_X)\) be an \(X\)-morphism. Then
\[Im(f)\subseteq N\Longleftrightarrow f^*\mathscr{E}'\longrightarrow (p\circ f)^*(O_X\oplus\mathscr{E}\oplus O_X)\longrightarrow (p\circ f)^*(O_X\oplus O_X)\textrm{ is an isomorphism}.\]
Consequently, \(N^-\subseteq N\cap HGr_X(O_X\oplus\mathscr{E}\oplus O_X)\).\\
2) Let \(T\) be an \(X\)-scheme and \(f:T\longrightarrow Gr_X(2n,\mathscr{E}\oplus O_X)\) be an \(X\)-morphism. Then
\[Im(f)\subseteq V\Longleftrightarrow f^*\mathscr{F}'\longrightarrow (q\circ f)^*(\mathscr{E}\oplus O_X)\longrightarrow (q\circ f)^*O_X\textrm{ is an isomorphism}.\]
Furthermore, \(N^-=V\).
\end{lemma}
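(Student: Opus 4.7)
The strategy rests on two standard facts: a morphism of locally free sheaves of the same finite rank on a scheme is an isomorphism iff every fiber is, and pullback commutes with passing to fibers. For the equivalence in part 1), ``$\Leftarrow$'' is immediate by evaluation at points of $T$. For ``$\Rightarrow$'', the composition $f^*\mathscr{E}'\to (p\circ f)^*(O_X\oplus O_X)$ is a morphism of rank-$2$ locally free sheaves on $T$ whose fiber at any $t\in T$ equals the fiber at $f(t)\in N$ of the defining morphism of $N$, hence is an isomorphism at every point, hence globally. The same argument with rank $1$ in place of rank $2$ gives the equivalence of part 2).

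For the inclusions $N^-\subseteq N\cap HGr_X(O_X\oplus\mathscr{E}\oplus O_X)$ and the equality $N^-=V$, I would trace the universal bundles through the cartesian square defining $N^-$. Since $i$ is induced by the projection $p_{23}\colon O_X\oplus\mathscr{E}\oplus O_X\to\mathscr{E}\oplus O_X$, the pullback $i^*\mathscr{E}'$ is canonically identified, as a subbundle of the pullback of $O_X\oplus\mathscr{E}\oplus O_X$, with $p_{23}^{-1}(\mathscr{F}')=O_X\oplus\mathscr{F}'$, where $\mathscr{F}'$ is the universal rank-$1$ subbundle on $Gr_X(2n,\mathscr{E}\oplus O_X)$.

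A single local computation then finishes both claims. Pick a local generator $(e_0,c_0)$ of $\mathscr{F}'$ and work in the basis $(1,0,0),(0,e_0,c_0)$ of $O_X\oplus\mathscr{F}'$. The symplectic form on $O_X\oplus\mathscr{E}\oplus O_X$ is $\omega((a,v,c),(a',v',c'))=ac'+m(v,v')-ca'$, so its restriction to $i^*\mathscr{E}'$ has Gram matrix $\left(\begin{smallmatrix}0 & c_0\\ -c_0 & 0\end{smallmatrix}\right)$ (using skew-symmetry of $m$), nondegenerate iff $c_0$ is a unit. The composition defining $N$ in this basis has matrix $\left(\begin{smallmatrix}1 & 0\\ 0 & c_0\end{smallmatrix}\right)$, an isomorphism iff $c_0$ is a unit. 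And $\mathscr{F}'\to O_X$ sends $t(e_0,c_0)$ to $tc_0$, an isomorphism iff $c_0$ is a unit. Thus the three conditions (factoring through $HGr_X(O_X\oplus\mathscr{E}\oplus O_X)$, through $N$, through $V$) coincide, and the universal property of the cartesian square yields both $N^-\subseteq N\cap HGr_X(O_X\oplus\mathscr{E}\oplus O_X)$ and $N^-=V$.

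The only real obstacle I foresee is the bookkeeping in identifying $i^*\mathscr{E}'$ with $O_X\oplus\mathscr{F}'$ \emph{as a subbundle} of the pullback of $O_X\oplus\mathscr{E}\oplus O_X$ (not merely abstractly), so that the subsequent computations of both the symplectic form and of the $N$-defining map are legitimate. Once this identification is set up, everything collapses to the two $2\times 2$ matrix computations above.
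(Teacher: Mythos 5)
Your proposal is correct and takes essentially the same route as the paper: the two equivalences are obtained from the compatibility of the degeneracy locus (equivalently, the support of the cokernel) with pullback, and both the inclusion \(N^-\subseteq N\cap HGr_X(O_X\oplus\mathscr{E}\oplus O_X)\) and the equality \(N^-=V\) reduce to the same local computation in a basis \((1,0,0),(x_1,x_2,x_3)\) of the universal rank-two subbundle, where nondegeneracy of the restricted symplectic form, surjectivity onto \(O_X\oplus O_X\), and surjectivity of \(\mathscr{F}'\to O_X\) are each equivalent to the last coordinate being a unit. Your packaging of the three open conditions into one \(2\times 2\) computation is a clean way to organize what the paper does in two separate steps.
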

\begin{proof}1) \(\Longrightarrow\) Easy. For the \(\Longleftarrow\) part, set
\[C=Coker(\mathscr{E}'\longrightarrow p^*(O_X\oplus\mathscr{E}\oplus O_X)\longrightarrow p^*(O_X\oplus O_X)).\]
We see that \(N=Supp(C)^c\). Since \(f^{-1}(Supp(C))=Supp(f^*C)\), \(f^{-1}(Supp(C))=\emptyset\) hence \(f^{-1}(N)=T\). So \(Im(f)\subseteq N\).

{For the second statement, let \(v:N^-\longrightarrow X\) be the structure map. We have a commutative diagram with exact rows:
\[
	\xymatrix
	{
		0\ar[r]	&K'\ar[r]					&v^*(\mathscr{E}\oplus O_X)\ar[r]					&\mathscr{G}\ar[r]			&0\\
		0\ar[r]	&O_X\oplus K'\ar[r]\ar[u]^{p_2}	&v^*(O_X\oplus\mathscr{E}\oplus O_X)\ar[r]\ar[u]^{p_{23}}	&\mathscr{G}\ar[r]\ar@{=}[u]	&0
	},
\]
where the first row is the pull-back of the tautological exact sequence of \(Gr_X(2n,\mathscr{E}\oplus O_X)\). So for every \(x\in N^-\), there is an affine neighborhood \(U\) of \(x\) such that \(K(U)\) is a free \(O_{N^-}(U)\)-module with a basis \((1,0,0)\) and \((0,x_1,x_2)\). Hence \(x_2\in O_{N^-}(U)^*\) by non degeneracy. Hence the map \(O_X\oplus K'\longrightarrow v^*(O_X\oplus\mathscr{E}\oplus O_X)\longrightarrow v^*(O_X\oplus O_X)\) is surjective on \(U\). So we see that \(N^-\subseteq N\) by the first statement.}

2) The first statement can be proved in the same way as in 1). {For the second statement, 
we see from the discussion above that the composite \(K'\longrightarrow v^*(\mathscr{E}\oplus O_X)\longrightarrow v^*O_X\) is an isomorphism.} So \(N^-\subseteq V\). The inclusion \(V\subseteq N^-\) can be proved using a similar method.
\end{proof}
\begin{lemma}\label{open condition1}
Let \(T\) be an \(X\)-scheme and \(f:T\longrightarrow Gr_X(2n,O_X\oplus\mathscr{E}\oplus O_X)\) be an \(X\)-morphism. Let \(\varphi\) be the composite 
\[
\xymatrix{(p\circ f)^*O_X\ar[r]^-{i_1}&(p\circ f)^*(O_X\oplus\mathscr{E}\oplus O_X)\ar[r]&f^*\mathscr{E}''}.
\] 
Then
\[Im(f)\subseteq Gr_X(2n,\mathscr{E}\oplus O_X)^c\Longleftrightarrow\varphi\textrm{ is injective and has a locally free cokernel}.\]
\end{lemma}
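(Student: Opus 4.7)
The plan is to parallel the strategy of the proof of Lemma \ref{open condition}: realize the image of $i$ as the support of a cokernel sheaf, then translate the set-theoretic containment $\mathrm{Im}(f)\subseteq Gr_X(2n,\mathscr{E}\oplus O_X)^c$ into a vanishing condition after pullback along $f$.

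The first step is to pin down $i$ scheme-theoretically. Under the paper's convention, $Gr_X(2n,\mathscr{E}\oplus O_X)$ parametrizes rank-one subbundles $\mathscr{F}'\subseteq\mathscr{E}\oplus O_X$ whose quotient has rank $2n$, whereas $\mathscr{E}'\subseteq O_X\oplus\mathscr{E}\oplus O_X$ has rank $2$. The map $i$ induced by $p_{23}$ must therefore be $\mathscr{F}'\mapsto p_{23}^{-1}(\mathscr{F}')=O_X\oplus\mathscr{F}'$, a rank-two subbundle automatically containing the image of $i_1$. Hence the image of $i$ is the closed subscheme $\{\mathscr{E}'\supseteq i_1(O_X)\}$, which is exactly the zero locus of the universal map $\varphi_{\mathrm{univ}}:p^*O_X\to\mathscr{E}''$.

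Next, following the support argument of Lemma \ref{open condition}, I would set $C=\mathrm{Coker}(\varphi_{\mathrm{univ}}^{\vee})$ with $\varphi_{\mathrm{univ}}^{\vee}:\mathscr{E}''^{\vee}\to p^*O_X$. Then $\mathrm{Supp}(C)$ is precisely the locus where $\varphi_{\mathrm{univ}}$ vanishes on fibers, i.e.\ the image of $i$, so $Gr_X(2n,\mathscr{E}\oplus O_X)^c=\mathrm{Supp}(C)^c$. Using the coherent-sheaf identity $\mathrm{Supp}(f^*C)=f^{-1}(\mathrm{Supp}(C))$, the containment $\mathrm{Im}(f)\subseteq Gr_X(2n,\mathscr{E}\oplus O_X)^c$ is equivalent to $f^*C=0$, equivalently to $\varphi^{\vee}$ being surjective on fibers, equivalently to $\varphi_t\neq 0$ for every $t\in T$.

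Since the source of $\varphi$ is the line bundle $(p\circ f)^*O_X$, fiberwise nonvanishing of $\varphi$ is equivalent to $\varphi$ being a subbundle inclusion, i.e.\ injective with locally free cokernel; this is the standard criterion for morphisms from a line bundle. The only genuinely delicate point is the first step: correctly matching the rank conventions and verifying that $i$ really is the inverse-image construction $\mathscr{F}'\mapsto O_X\oplus\mathscr{F}'$. Once that identification is in place, the rest of the argument is a direct imitation of Lemma \ref{open condition}.
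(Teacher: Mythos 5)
Your proof is correct, and it reaches the same endpoint as the paper's by a slightly different intermediate route. The paper first reduces the containment \(Im(f)\subseteq Gr_X(2n,\mathscr{E}\oplus O_X)^c\) to field-valued points \(g:\textrm{Spec }K\longrightarrow T\), notes that over a field the containment says exactly that \(f\circ g\) does not factor through \(Gr_X(2n,\mathscr{E}\oplus O_X)\), i.e.\ that \(g^*(\varphi)\neq 0\), and only then localizes on an affine \(T\) to the description \((a_i):A\longrightarrow A^{\oplus 2n}\), where ``nonzero at every residue field'' becomes ``\((a_i)\) is the unit ideal''. You instead globalize from the start: you realize the closed complement as \(Supp(Coker(\varphi_{\mathrm{univ}}^{\vee}))\) and invoke \(Supp(f^*C)=f^{-1}(Supp(C))\) --- exactly the technique the paper uses for part 1) of Lemma \ref{open condition} --- so that the containment becomes surjectivity of \(\varphi^{\vee}\), hence fiberwise nonvanishing of \(\varphi\). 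Both arguments terminate in the same local fact that a map \((a_i):A\longrightarrow A^{\oplus 2n}\) is fiberwise nonzero iff \((a_i)=A\) iff it is injective with projective cokernel. What your version buys is uniformity with the preceding lemma and the avoidance of the pointwise reduction; what it costs is the need to identify \(Im(i)\) as the vanishing locus of \(\varphi_{\mathrm{univ}}\), which you rightly flag as the delicate step and which the paper's functor-of-points phrasing (``does not factor through'') handles implicitly. Note that only the set-theoretic identification is required there, since containment in an open subscheme is a purely topological condition, so your argument goes through.
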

\begin{proof}
\[Im(f)\subseteq Gr_X(2n,\mathscr{E}\oplus O_X)^c\Longleftrightarrow\forall g:\textrm{Spec }K\longrightarrow T, Im(f\circ g)\subseteq Gr_X(2n,\mathscr{E}\oplus O_X)^c,\]
where \(K\) is a field. So let's assume \(T=\textrm{Spec }K\). In this case,
\[Im(f)\subseteq Gr_X(2n,\mathscr{E}\oplus O_X)^c\Longleftrightarrow f\textrm{ does not factor through }Gr_X(2n,\mathscr{E}\oplus O_X),\]
and the latter condition is equivalent to \(\varphi\neq 0\). Hence
\[Im(f)\subseteq Gr_X(2n,\mathscr{E}\oplus O_X)^c\Longleftrightarrow\forall g:\textrm{Spec }K\longrightarrow T, g^*(\varphi)\neq 0.\]
Now we may assume that \(T\) is affine and use the residue fields of \(T\). Locally, the map \(\varphi\) is like \((a_i):A\longrightarrow A^{\oplus 2n}\) and the condition just says that the ideal \((a_i)\) is the unit ideal, which is equivalent to \((a_i)\) being injective and \(Coker((a_i))\) being projective. This just says that \(\varphi\) is injective and has a locally free cokernel.
\end{proof}

Consider next the following square
\[
	\xymatrix
	{
		N^-\ar[r]^-l\ar[d]_-v	&N\ar[d]^-u\\
		X\ar[r]_-z			&E
	}
\]
where \(l\) is given by \(N^-\subseteq N\) and \(v\) is just the structure map (of \(N^-\)). Let \(r:N\longrightarrow X\) be the structure map of \(N\). We have the tautological exact sequence
\[
	\begin{array}{cccccccccc}
		0	&\longrightarrow	&r^*(O_X\oplus O_X)	&\longrightarrow	&r^*(O_X\oplus\mathscr{E}\oplus O_X)	&\longrightarrow	&r^*\mathscr{E}	&\longrightarrow	&0	&\\
			&				&(1,0)			&\longmapsto		&(1,s_1,0)						&				&			&				&	&(**)\\
			&				&(0,1)			&\longmapsto		&(0,s_2,1)						&				&			&				&	&
	\end{array}
\]
and \(u\) is induced by \(s_1\). Finally, \(z\) is the zero section of \(E\).
\begin{proposition}\label{Cartesian}
The above square is a Cartesian square.
\end{proposition}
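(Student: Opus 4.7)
The plan is to check the Cartesian property on functors of points, leveraging the universal properties of the Grassmannian bundles involved.

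First, for commutativity: by Lemma \ref{open condition}(2), \(N^- = V\) lies inside both \(Gr_X(2n, \mathscr{E}\oplus O_X)\) (via \(i\)) and \(HGr_X(O_X\oplus\mathscr{E}\oplus O_X)\), and also in \(N\). Hence on \(N^-\) the tautological sub \(v^*\mathscr{E}'\) contains the first \(O_X\)-summand of \(v^*(O_X\oplus\mathscr{E}\oplus O_X)\). Comparing with the inclusion described in \((**)\), we must have \(l^*(s_1) = 0\). Since \(u\) is the \(X\)-morphism induced by \(s_1 \in r^*\mathscr{E}(N) = \mathrm{Hom}_X(N,E)\) and \(z\) is the zero section, this gives \(u \circ l = z \circ v\).

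Next, for the universal property: let \(T\) be an \(X\)-scheme with structure \(a: T \to X\) and \(b: T \to N\) an \(X\)-morphism satisfying \(u \circ b = z \circ a\). This translates to \(b^*(s_1) = 0\) in \(a^*\mathscr{E}(T)\). Pulling back \((**)\) along \(b\), the inclusion of the tautological sub \(b^*\mathscr{E}' \subseteq a^*(O_X\oplus\mathscr{E}\oplus O_X)\) sends \((1,0) \mapsto (1,0,0)\), so the first \(a^*O_X\)-summand lies inside \(b^*\mathscr{E}'\). By the universal property of the Grassmannian, \(b\) then factors through \(i: Gr_X(2n,\mathscr{E}\oplus O_X) \hookrightarrow Gr_X(2n, O_X\oplus\mathscr{E}\oplus O_X)\).

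Finally, to conclude that \(b\) factors through \(N^-\), we must show that \(b^*\mathscr{E}'\) is a symplectic subbundle, i.e., that \(b\) lands in \(HGr_X(O_X\oplus\mathscr{E}\oplus O_X)\). Since \(b\) lies in \(N\), the projection \(b^*\mathscr{E}' \to a^*O_X\) onto the third coordinate is an isomorphism; combining with \(b^*s_1 = 0\), a local basis of \(b^*\mathscr{E}'\) is given by \((1,0,0)\) and \((0, b^*s_2, 1)\). The Gram matrix of the ambient symplectic form on this basis reduces, thanks to its block form on \(O_X\oplus\mathscr{E}\oplus O_X\) and the identity \(m(v,v)=0\), to the \(2\times 2\) matrix with zero diagonal entries and off-diagonal entries \(\pm 1\), which is nondegenerate. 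Hence \(b\) factors through \(N^- = Gr_X(2n,\mathscr{E}\oplus O_X) \cap HGr_X(O_X\oplus\mathscr{E}\oplus O_X)\), and uniqueness of the lift is automatic since \(l: N^- \to N\) is a monomorphism. The only mild obstacle is the final nondegeneracy check, but it is a routine computation once the local basis is chosen; the real conceptual content is the recognition that the vanishing of \(s_1\) encodes precisely the containment of the first \(O_X\) in \(\mathscr{E}'\).
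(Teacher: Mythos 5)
Your proof is correct and follows the same route as the paper: commutativity is obtained by observing that the tautological subbundle on \(N^-\) contains the first \(O_X\)-summand, forcing \(l^*(s_1)=0\). The paper leaves the verification of the universal property to the reader ("One checks that the square is Cartesian"), and your functor-of-points argument — \(b^*s_1=0\) forces the first summand into \(b^*\mathscr{E}'\), the Gram matrix \(\left(\begin{smallmatrix}0&1\\-1&0\end{smallmatrix}\right)\) gives nondegeneracy, hence factorization through \(N^-\) — supplies exactly that omitted check, correctly.
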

\begin{proof}The map \(l\) induces an exact sequence
\[
	\begin{array}{ccccccccc}
		0	&\longrightarrow	&v^*(O_X\oplus O_X)	&\longrightarrow	&v^*(O_X\oplus\mathscr{E}\oplus O_X)	&\longrightarrow	&v^*\mathscr{E}	&\longrightarrow	&0\\
			&				&(1,0)			&\longmapsto		&(1,s,0)						&				&			&				&
	\end{array}.
\]
But \((1,0,0)\) belongs to the kernel since \(N^-\subseteq Gr_X(2n,\mathscr{E}\oplus O_X)\), so \(s=0\). Hence the square commutes and is Cartesian by \(N^-=V\).
\end{proof}

Now, we use the square
\[
	\xymatrix
	{
		N\ar[r]^-w\ar[d]_-u	&E\ar[d]^-{\pi}\\
		E\ar[r]_-{\pi}		&X,
	}
\]
where \(w\) is induced by \(s_2\) in (**). We see that it's a Cartesian square just by that diagram, since there are two (arbitrary) sections there. So \(u\) is an \(\mathbb{A}^{2n}\)-bundle.

The following proposition has a similar version in \cite[Proposition 4.3]{PW}, but we are not considering the same embedding as there.
\begin{proposition}\label{thomdefor}
\[Th_S(E)\cong\widetilde{M}_{N^-}(N)\cong\widetilde{M}_{N^-}(HGr_X(O_X\oplus\mathscr{E}\oplus O_X))\]
in \(\widetilde{DM}^{eff,-}(S)\).
\end{proposition}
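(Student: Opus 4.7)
The plan is to establish the two isomorphisms separately: the first by $\mathbb{A}^1$-homotopy invariance combined with a five-lemma argument, and the second by two applications of Zariski excision.

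For the first isomorphism $\widetilde{M}_X(E) \cong \widetilde{M}_{N^-}(N)$, the starting point is the second Cartesian square, which identifies $N$ with the fiber product $E \times_X E$ and thus exhibits $u : N \to E$ as the pullback of the vector bundle $\pi : E \to X$ along itself, so $u$ is the total space of the vector bundle $\pi^*\mathscr{E}$ over $E$. Pulling back along $z : X \to E$ via the first Cartesian square shows that $v : N^- \to X$ is the total space of $z^*\pi^*\mathscr{E} \cong \mathscr{E}$, hence also a vector bundle. Restricting $u$ to the open complement gives a vector bundle $N \setminus N^- \to E \setminus X$. By $\mathbb{A}^1$-homotopy invariance in $\widetilde{DM}$, all three of these maps induce isomorphisms on $\widetilde{M}$. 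Applying the triangulated five-lemma to the morphism of distinguished triangles
\[\xymatrix{\widetilde{M}(N \setminus N^-) \ar[r]\ar[d]_-{\cong} & \widetilde{M}(N) \ar[r]\ar[d]_-{\cong} & \widetilde{M}_{N^-}(N) \ar[d] \\ \widetilde{M}(E \setminus X) \ar[r] & \widetilde{M}(E) \ar[r] & \widetilde{M}_X(E)}\]
induced by $u$ (using $N^- = u^{-1}(X)$ from the first Cartesian square) then yields the desired isomorphism.

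For the second isomorphism, set $G := Gr_X(2n, O_X \oplus \mathscr{E} \oplus O_X)$. By construction $N^-$ is the intersection inside $G$ of the open subset $HGr_X(O_X \oplus \mathscr{E} \oplus O_X)$ with the closed subset $i(Gr_X(2n, \mathscr{E} \oplus O_X))$, so $N^-$ is closed in $N$, in $HGr_X(O_X \oplus \mathscr{E} \oplus O_X)$, and in the open subset $U := N \cap HGr_X(O_X \oplus \mathscr{E} \oplus O_X)$ of $G$. Two applications of Proposition \ref{excision} to the open immersions $U \hookrightarrow N$ and $U \hookrightarrow HGr_X(O_X \oplus \mathscr{E} \oplus O_X)$ (which are étale and restrict to the identity on $N^-$) yield a zigzag of isomorphisms
\[\widetilde{M}_{N^-}(N) \xleftarrow{\cong} \widetilde{M}_{N^-}(U) \xrightarrow{\cong} \widetilde{M}_{N^-}(HGr_X(O_X \oplus \mathscr{E} \oplus O_X)).\]

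The main obstacle lies in the first part: one has to verify carefully from the two Cartesian squares that $u$, $v$, and $u|_{N \setminus N^-}$ are all genuinely (pullbacks of) vector bundles over their respective bases so that $\mathbb{A}^1$-invariance applies, and that these fit into an honest morphism of distinguished triangles. Once this bookkeeping is in place, the second part is essentially formal excision.
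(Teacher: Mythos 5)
Your proof is correct and is essentially the argument the paper intends: the paper's own proof is just ``Easy from the results above and Proposition (excision)'', and those results are exactly the two Cartesian squares you invoke ($u\colon N\to E$ a vector bundle with $N^-=u^{-1}(X)$, handled by $\mathbb{A}^1$-invariance and the localization triangles) together with \'etale excision applied through the Zariski open $N\cap HGr_X(O_X\oplus\mathscr{E}\oplus O_X)$. The only detail worth keeping explicit is that $N^-$ is closed in $N$ — which follows from $N^-=u^{-1}(X)$ with $X\hookrightarrow E$ the zero section, as you note — so that excision genuinely applies.
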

\begin{proof}
The first isomorphism comes from Proposition \ref{Cartesian} and the fact that \(u:N\longrightarrow E\) is an \(\mathbb{A}^{2n}\)-bundle. Then second isomorphism is because \(N^-\subseteq N\cap HGr_X(O_X\oplus\mathscr{E}\oplus O_X)\) by Lemma \ref{open condition} and Proposition \ref{excision}.
\end{proof}
Now by Lemma \ref{open condition1}, the natural embedding \(HGr_X(\mathscr{E})\longrightarrow HGr_X(O_X\oplus\mathscr{E}\oplus O_X)\) factor through \((N^-)^c\), thus we have a map \(i:HGr_X(\mathscr{E})\longrightarrow (N^-)^c\).
\begin{proposition}\label{deformation}
\[\widetilde{\mathbb{Z}}_S(i):\widetilde{\mathbb{Z}}_S(HGr_X(\mathscr{E}))\longrightarrow\widetilde{\mathbb{Z}}_S((N^-)^c)\]
is an isomorphism in \(\widetilde{DM}^{eff,-}(S)\).
\end{proposition}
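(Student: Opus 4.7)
The plan is to realize the inclusion $i$ as an $\mathbb{A}^1$-homotopy equivalence of smooth $X$-schemes, so that applying $\widetilde{M}$ produces an isomorphism in $\widetilde{DM}$ by $\mathbb{A}^1$-invariance. I would construct a morphism $r : (N^-)^c \to HGr_X(\mathscr{E})$ with $r \circ i = \mathrm{id}$ and then exhibit an explicit $\mathbb{A}^1$-homotopy between $i \circ r$ and $\mathrm{id}_{(N^-)^c}$.

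For the retraction, let $\widetilde{\mathscr{F}}$ denote the restriction to $(N^-)^c$ of the tautological rank-two symplectic subbundle of $\pi^{*}(O_X \oplus \mathscr{E} \oplus O_X)$, where $\pi$ is the structure map. The key claim is that the middle projection $p_2$ embeds $\widetilde{\mathscr{F}}$ as a rank-two symplectic subbundle of $\pi^{*}\mathscr{E}$. Indeed, by Lemma \ref{open condition} the restriction of $p_{13}$ to $\widetilde{\mathscr{F}}$ has rank at most one on $(N^-)^c$, so for any local basis $(a_i, x_i, b_i)$, $i=1,2$, of $\widetilde{\mathscr{F}}$, the determinant $D := a_1 b_2 - a_2 b_1$ vanishes. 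The $2 \times 2$ matrix of the restricted symplectic form in this basis is skew-symmetric with off-diagonal entry $D + m(x_1, x_2) = m(x_1, x_2)$, and nondegeneracy forces $m(x_1, x_2)$ to be a unit. It follows that $(x_1, x_2)$ generates a rank-two free submodule of $\pi^{*}\mathscr{E}$ on which $m$ is nondegenerate, so $r(\mathscr{F}) := p_2(\mathscr{F})$ gives a well-defined morphism into $HGr_X(\mathscr{E})$ satisfying $r \circ i = \mathrm{id}$ tautologically.

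For the homotopy, consider the endomorphism $\psi_t(a, x, b) = (ta, x, tb)$ of $O_X \oplus \mathscr{E} \oplus O_X$ parametrized by $t \in \mathbb{A}^1$. Applying $\psi_t$ fiberwise to $\widetilde{\mathscr{F}}$ over $\mathbb{A}^1 \times (N^-)^c$ yields a rank-two subsheaf of $\pi^{*}(O_X \oplus \mathscr{E} \oplus O_X)$: injectivity of $\psi_t$ on $\widetilde{\mathscr{F}}$ is obvious for $t$ invertible, and at $t=0$ it reduces to injectivity of $p_2|_{\widetilde{\mathscr{F}}}$ just established. The off-diagonal entry of the restricted form becomes $t^2 D + m(x_1, x_2) = m(x_1, x_2)$, still a unit, and the determinant of the resulting projection to $O_X \oplus O_X$ is $t^2 D = 0$, so the family lies in $(N^-)^c$. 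At $t=1$ this recovers $\mathrm{id}_{(N^-)^c}$ and at $t=0$ it recovers $i \circ r$, giving the desired $\mathbb{A}^1$-homotopy $H : \mathbb{A}^1 \times (N^-)^c \to (N^-)^c$.

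The main obstacle is the rank analysis in the construction of $r$: one must combine the nondegeneracy of the inherited symplectic form on $\widetilde{\mathscr{F}}$ with the defining open condition of $(N^-)^c$ in a coordinated way to conclude simultaneously that $p_2|_{\widetilde{\mathscr{F}}}$ is an embedding of vector bundles and that its image lies in $HGr_X(\mathscr{E})$. Once this is in hand, the $\mathbb{A}^1$-invariance of $\widetilde{M}$ in $\widetilde{DM}$ (cf. \cite{DF}, Corollary 3.2.11) concludes the proof.
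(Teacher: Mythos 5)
There is a genuine gap, and it sits at the very first step: the claim that Lemma \ref{open condition} forces $p_{13}|_{\widetilde{\mathscr{F}}}$ to have rank at most one on $(N^-)^c$, hence $D=a_1b_2-a_2b_1=0$. You have confused the complement of $N^-$ with the complement of $N$. The scheme $(N^-)^c$ is the open locus in $HGr_X(O_X\oplus\mathscr{E}\oplus O_X)$ of symplectic planes $U$ \emph{not containing} the vector $e_0=(1,0,0)$ (Lemma \ref{open condition} says that $p_{13}|_U$ is an isomorphism \emph{on} $N\supseteq N^-$, not that it degenerates on the complement), and $(N^-)^c$ meets the big cell $N$, where $D$ is a unit, in a dense open set. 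Concretely, with $m(f_1,f_2)=1$, the plane $U=\langle(1,f_1,0),(0,f_2,1)\rangle$ lies in $(N^-)^c$ and has $D=1$. Everything downstream collapses: (i) $p_2|_U$ need not be injective — $U=\langle(0,0,1),(1,f_1,0)\rangle$ is a symplectic plane in $(N^-)^c$ whose image under $p_2$ is the line $\langle f_1\rangle$, so $r$ is not even defined there; (ii) where $p_2|_U$ is injective its image need not be symplectic, since the Gram entry of $\phi|_U$ is $D+m(x_1,x_2)$ and only this \emph{sum} is forced to be a unit — for $\mathrm{rank}\,\mathscr{E}=4$ the plane $\langle(1,f_1,0),(0,f_3,1)\rangle$ lies in $(N^-)^c$ and projects onto the isotropic plane $\langle f_1,f_3\rangle$; (iii) the family $\psi_t(U)$ has Gram entry $t^2D+m(x_1,x_2)$, which can vanish for intermediate $t$, and $\psi_0(U)$ can drop rank as in (i), so $H$ neither stays in $HGr$ nor in $(N^-)^c$; moreover the condition you verify (vanishing of the determinant of the projection to $O_X\oplus O_X$) cuts out the complement of $N$, not of $N^-$.

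Your overall strategy — an explicit retraction plus an $\mathbb{A}^1$-homotopy, concluded by $\mathbb{A}^1$-invariance of $\widetilde{M}$ — is the right kind of argument and is exactly what the proof of \cite{PW}, Theorem 5.2 carries out; the paper's proof consists of citing that argument. But that proof is genuinely more involved precisely because no single linear projection retracts all of $(N^-)^c$ onto $HGr_X(\mathscr{E})$: one must use the symplectic form itself to handle the planes meeting $O_X\oplus 0\oplus O_X$ nontrivially, and the homotopy is assembled in several stages. To repair your argument you would need to replace $p_2$ and $\psi_t$ by the constructions of loc.\ cit.\ (or reduce to them); as written, the proof does not establish the proposition.
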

\begin{proof}Follows from the proof of \cite[Theorem 5.2]{PW}.
\end{proof}

\section{The \(SL\)-orientation of Chow-Witt rings}\label{sl}

Now let's discuss the notion of \(SL\)-orientation.
\begin{definition}
Let \(X\) be a scheme and let \(\mathscr{E}\) be a vector bundle over \(X\). A section \(s\in(det\mathscr{E}^{\vee})(X)\) is called an \(SL\)-orientation of \(\mathscr{E}\) if \(s\) trivializes \(\textrm{det}\mathscr{E}^{\vee}\). A vector bundle with an \(SL\)-orientation is called \(SL\)-orientable.

Let \(\mathscr{E}_1\),\(\mathscr{E}_2\) be two \(SL\)-orientable vector bundles over a scheme \(X\) with \(SL\)-orientations \(s_1\), \(s_2\), respectively. A morphism \(f:\mathscr{E}_1\longrightarrow\mathscr{E}_2\) between \(SL\)-orientable bundles is an isomorphism between vector bundles satisfiying \(det(f)^{\vee}(s_2)=s_1\).
\end{definition}
\begin{definition}

{Let \((\mathscr{E},m)\) be a symplectic bundle. Then \(\mathscr{E}\) has an \(SL\)-orientation induced by \(m\) by remarks before \cite[Definition 4.5]{A3}, which is called its canonical orientation.}
\end{definition}

In the case \(rk\mathscr{E}=2\), there is a bijection
\[\xymatrixcolsep{6pc}\xymatrix{\{\textrm{Symplectic structures on \(\mathscr{E}\)}\}\ar[r]^-{\textrm{canonical orientation}}&\{\textrm{\(SL\)-Orientations of \(\mathscr{E}\)}\}}.\]
So for convenience, if \(\mathscr{E}\) is symplectic, {\(\mathscr{E}\) is always endowed with its canonical orientation.}

The following proposition from \cite[Corollary 5.4]{A2} says that Chow-Witt rings theory admits an (unique) \(SL\)-orientation.
\begin{proposition}\label{SL}
Suppose \(X\in Sm/k\), \(E\) is an \(SL\)-orientable bundle of rank \(n\) over \(X\), \(i\geq n\) and \(j\in\mathbb{Z}\). We have a functorial element
\[th(E)\in A_X^{2n,n}(E)\]
such that the morphism
\[-\boxtimes th(E):A^{j-2n,i-n}(X)\longrightarrow A_X^{j,i}(E)\]
is an isomorphism.

Moreover, \(th(E)\) is determined by the following property: For any \(U\) open in \(X\) such that \(\xymatrix{E|_U\ar[r]^-{\alpha}_-{\cong}& O_U^{\oplus rk(E)}}\) as \(SL\)-orientable bundles, we have
\[th(E)|_U=t_n\circ Th_{pt}(\alpha).\]
\end{proposition}
From the proposition above, we see that {\(A_Z^{j,i}(X)\)} in Definition \ref{CPW} satisfies \cite[Definition 7.1]{PW}.

The following definition comes from \cite[remark after Proposition 7.2]{PW} and \cite[remark before Proposition 3.1.1]{AF}.
\begin{definition}\label{first Pontryagin class}
Let \(X\in Sm/k\) and \(\mathscr{E}\) be an \(SL\)-orientable vector bundle of rank \(n\) over \(X\) with the zero section \(z\). Define its Euler class \(e(\mathscr{E})\) to be the composite
\[\xymatrixcolsep{3pc}\xymatrix{\widetilde{\mathbb{Z}}_{pt}(X)\ar[r]^z&\widetilde{\mathbb{Z}}_{pt}(E)\ar[r]&Th_{pt}(E)\ar[r]^-{th(E)}&\widetilde{\mathbb{Z}}_{pt}(n)[2n]}.\]
If \(n=2\), define the first Pontryagin class of \(\mathscr{E}\) to be \(-e(\mathscr{E})\), which is denoted by \(p_1(\mathscr{E})\).

{Moreover, suppose \(S\in Sm/k\) and \(X\in Sm/S\), then the Euler class (resp. first Pontryagin class) above induces an element in \(Hom_{\widetilde{DM}^{eff,-}(S)}(\widetilde{\mathbb{Z}}_S(X),\widetilde{\mathbb{Z}}_S(n)[2n])\) by adjunction. This is defined to be the Euler class (resp. first Pontryagin class) of \(\mathscr{E}\) over \(S\).}
\end{definition}
If two \(SL\)-orientable bundles are isomorphic, their Euler classes are the same. In particular, if two symplectic bundles of rank \(2\) are isomorphic (including their inner products) then their first Pontryagin classes under the canonical orientations are equal.

\section{Quaternionic projective bundle theorem}\label{Projbundle}

Now let's start to calculate the motive of \(HP^n\). Let \(x_1,\ldots,x_{2n+2}\) be the coordinates of the underlying vector space of \(HP^n\subseteq Gr(2,2n+2)\). We have an inclusion \(k:HP^n\longrightarrow HP^{n+1}\) defined by
{\[k\left(\left(\begin{array}{c}x_1,\ldots,x_{2n+2}\\y_1,\ldots,y_{2n+2}\end{array}\right)\right)=\left(\begin{array}{c}x_1,\ldots,x_{n+1},0,x_{n+2},\ldots,x_{2n+2},0\\y_1,\ldots,y_{n+1},0,y_{n+2},\ldots,y_{2n+2},0\end{array}\right),\]} 
where \(\left(\begin{array}{c}v_1\\v_2\end{array}\right)\) means a two dimensional subspace written in its coordinates spanned by \(v_1, v_2\) in a \(k\)-vector space.

The following proposition follows by applying \cite[Theorem 8.1]{PW} and Proposition \ref{base} on the cohomology theory defined in Definition \ref{CPW}. {It was also proved in \cite[Corollary 1.2]{HW}.}
\begin{proposition}\label{Chow-Witt}
Let \(w:HP^n\to \mathrm{Spec}(k)\) be the structure map. Then the map
\[\begin{array}{ccc}A^{0,0}(pt)&\longrightarrow&A^{4i,2i}(HP^n)\\x&\longmapsto&w^*(x)\cdot p_1(\mathscr{U}^{\vee})^{i}\end{array}\]
is an isomorphism between abelian groups, where \(i=0,\ldots,n\).
\end{proposition}
\begin{theorem}\label{splitting}
For any $n\geq 0$, we have
\[\widetilde{\mathbb{Z}}_{pt}(HP^n)\cong\oplus_{i=0}^{n}\widetilde{\mathbb{Z}}_{pt}(2i)[4i]\]
in \(\widetilde{DM}^{eff,-}(pt)\).
\end{theorem}
\begin{proof}
We prove by induction. The statement is clearly true for \(n=0\). We thus suppose it's true for some \(n\geq 0\) and prove the result for $n+1$. Let then \[\theta:\widetilde{\mathbb{Z}}_{pt}(HP^n)\longrightarrow\oplus_{i=0}^n\widetilde{\mathbb{Z}}_{pt}(2i)[4i]\] be such an isomorphism.

We claim that the inclusion \(k:\widetilde{\mathbb{Z}}_{pt}(HP^n)\longrightarrow\widetilde{\mathbb{Z}}_{pt}(HP^{n+1})\) splits in \(\widetilde{DM}^{eff,-}(pt)\). Indeed, Proposition \ref{base} yield a commutative diagram in which the vertical homomorphisms are isomorphisms
{\[
\xymatrix@C=1.2em{
\mathrm{Hom}_{\widetilde{DM}^{eff,-}(pt)}(\widetilde{\mathbb{Z}}_{pt}(HP^{n+1}),\widetilde{\mathbb{Z}}_{pt}(HP^n))\ar[r]^-{k^*}\ar[d]_{\theta_*} & \mathrm{Hom}_{\widetilde{DM}^{eff,-}(pt)}(\widetilde{\mathbb{Z}}_{pt}(HP^n),\widetilde{\mathbb{Z}}_{pt}(HP^n))\ar[d]_{\theta_*}\\
\mathrm{Hom}_{\widetilde{DM}^{eff,-}(pt)}(\widetilde{\mathbb{Z}}_{pt}(HP^{n+1}),\oplus_{i=0}^n\widetilde{\mathbb{Z}}_{pt}(2i)[4i])\ar[r]^-{k^*} & \mathrm{Hom}_{\widetilde{DM}^{eff,-}(pt)}(\widetilde{\mathbb{Z}}_{pt}(HP^n),\oplus_{i=0}^n\widetilde{\mathbb{Z}}_{pt}(2i)[4i]).}
\]}
It suffices then to prove that for any \(i=0,2,\ldots,2n\), the pull-back 
\[
k^*:A^{2i,i}(HP^{n+1})\longrightarrow A^{2i,i}(HP^n)
\] 
is an isomorphism since the first horizontal arrow in the above diagram will be an isomorphism. This directly follows by Proposition \ref{Chow-Witt}.

Then by Proposition \ref{thomdefor}, Proposition \ref{deformation} and Proposition \ref{punctured affine space}, we have
{\[\widetilde{\mathbb{Z}}_{pt}(HP^{n+1})/\widetilde{\mathbb{Z}}_{pt}(HP^n)\cong\widetilde{M}_0(\mathbb{A}^{2n+2})\cong\widetilde{\mathbb{Z}}_{pt}(2n+2)[4n+4]\]}
in \(\widetilde{DM}^{eff,-}(pt)\), completing the induction process.

\end{proof}

Now we want to improve Theorem \ref{splitting} and find an explicit isomorphism using the first Pontryagin class of the dual tautological bundle on \(HP^n\).

\begin{lemma}\label{isomorphism}
Let \(\mathscr{C}\) be an additive category. Let \(M\), \(M_i\), \(i=1,\ldots,n\) be objects in \(\mathscr{C}\) such that \(Hom_{\mathscr{C}}(M_i, M_j)=0\) if \(i\neq j\). Suppose that there is an isomorphism \(\varphi:M\longrightarrow\oplus_i M_i\). Then for any morphism \(\varphi':M\longrightarrow\oplus_i M_i\), \(\varphi'\) is an isomorphism if and only if \(\varphi'_i\) is a free generator of \(Hom_{\mathscr{C}}(M, M_i)\) as left \(End_{\mathscr{C}}(M_i)\)-module for any \(i\), where \(\varphi'_i\) is composition of \(\varphi'\) and the \(i^{th}\) projection.
\end{lemma}
\begin{proof}
Suppose that \(\varphi'\) is an isomorphism.  We prove that \(\varphi'_i\) a free generator of \(Hom_{\mathscr{C}}(M, M_i)\) as a left \(End_{\mathscr{C}}(M_i)\)-module.

The action is free since \(\varphi'_i\) is surjective. Now suppose \(\psi\in Hom_{\mathscr{C}}(M, M_i)\). Since \(Hom_{\mathscr{C}}(M_i, M_j)=0\) if \(i\neq j\), we see that \(\psi=(\psi\circ\varphi'^{-1}\circ i_i)\circ(\varphi'_i)\) where \(i_i\) is the natural map as direct sum. Hence \(\psi\) can be generated by \(\varphi'_i\), so \(\varphi'_i\) is indeed a free generator.

Conversely, if we have a morphism \(\varphi':M\longrightarrow\oplus_i M_i\) such that \(\varphi'_i\) is a free generator of \(Hom_{\mathscr{C}}(M, M_i)\), then \(\varphi'_i=f_i\circ\varphi_i\) for some isomorphism \(f_i\). Hence \(\varphi'\) is also an isomorphism.
\end{proof}

For any \(S\in Sm/k\), we have a projection \(q:HP^n_S\longrightarrow HP^n\) and we set \(\mathscr{U}^{\vee}_S=q^*\mathscr{U}^{\vee}\).
\begin{theorem}\label{splitting1}
The map
\[\xymatrixcolsep{5pc}\xymatrix{\widetilde{\mathbb{Z}}_S(HP^n_S)\ar[r]^-{p_1(\mathscr{U}^{\vee}_S)^i}&\oplus_{i=0}^{n}\widetilde{\mathbb{Z}}_S(2i)[4i]}\]
is an isomorphism in \(\widetilde{DM}^{eff,-}(S)\).
\end{theorem}

\begin{proof}
Let's suppose at first \(S=pt\). By Theorem \ref{splitting}, Corollary \ref{cor:orthogonality} and Lemma \ref{isomorphism}, it remains to prove that \(p^i\) is a free generator of \(A^{4i,2i}(HP^n)\). 

{Denote the structure map \(HP^n\longrightarrow pt\) by \(w\), we see that the composite
\[\xymatrixcolsep{3pc}\xymatrix{A^{0,0}(pt)\ar[r]^-{w^*}&A^{0,0}(HP^n)\ar[r]^-{\cdot p_1(\mathscr{U}^{\vee})^i}&A^{4i,2i}(HP^n)}\]
is an isomorphism by Proposition \ref{Chow-Witt}  and it equals to the composite
\[\xymatrixcolsep{4pc}\xymatrix{A^{0,0}(pt)\ar[r]^-{\otimes\widetilde{\mathbb{Z}}_{pt}(2i)[4i]}&End(\widetilde{\mathbb{Z}}_{pt}(2i)[4i])\ar[r]^-{(p_1(\mathscr{U}^{\vee})^i)^*}&A^{4i,2i}(HP^n)}.\]
Hence the claim follows by Proposition \ref{cancellation}.}

In the case when \(S\) is general, we have a commutative diagram
\[
	\xymatrixcolsep{4pc}
	\xymatrix
	{
		q^*\widetilde{\mathbb{Z}}_{pt}(HP^n)\ar[r]^-{q^*(p_1(\mathscr{U}^{\vee}))}		&q^*\widetilde{\mathbb{Z}}_{pt}(2)[4]\\
		\widetilde{\mathbb{Z}}_S(HP^n_S)\ar[u]^-{\cong}\ar[r]^-{p_1(\mathscr{U}^{\vee}_S)}	&\widetilde{\mathbb{Z}}_S(2)[4]\ar[u]^{\cong}.
	}
\]
Hence the result follows by the commutative diagram
\[
	\xymatrixcolsep{5pc}
	\xymatrix
	{
		q^*\widetilde{\mathbb{Z}}_{pt}(HP^n)\ar[r]^-{q^*(p_1(\mathscr{U}^{\vee}))^i}	&\oplus_{i=0}^{n}q^*\widetilde{\mathbb{Z}}_{pt}(2i)[4i]\\
		\widetilde{\mathbb{Z}}_S(HP^n_S)\ar[r]^-{p_1(\mathscr{U}^{\vee}_S)^i}\ar[u]^{\cong}	&\oplus_{i=0}^{n}\widetilde{\mathbb{Z}}_S(2i)[4i],\ar[u]^{\cong}
	}
\]
where the upper horizontal arrow is an isomorphism by the discussion above.
\end{proof}

\begin{theorem}\label{quaternionic projective bundle theorem}
Let \(X\in Sm/S\) and let \((\mathscr{E},m)\) be a symplectic vector bundle of rank \(2n+2\) on \(X\). Let $\pi:HGr_X(\mathscr{E})\to X$ be the projection. Then, the map
\[\xymatrixcolsep{6pc}\xymatrix{\widetilde{\mathbb{Z}}_{S}(HGr_X(\mathscr{E}))\ar[r]^-{\pi\boxtimes p_1(\mathscr{U}^{\vee})^i}&\oplus_{i=0}^{n}\widetilde{\mathbb{Z}}_{S}(X)(2i)[4i]}\]
is an isomorphism in \(\widetilde{DM}^{eff,-}(S)\), funtorial for \(X\) in {\(Sm/S\)}.
\end{theorem}

\begin{proof}
{Suppose at first \(S=X\) and denote the morphism in the statement by \(\varphi\). We pick a finite open covering \(\{U_{\alpha}\}\) of \(X\) such that 
\[
(\mathscr{E},m)|_{U_{\alpha}}\cong\left(O_{U_{\alpha}}^{\oplus 2n+2},\left(\begin{array}{cc}&I\\-I&\end{array}\right)\right)
\]
for every \(\alpha\) (see \cite[Lemma 2.7]{A2}). For every \(\alpha\),  \(\varphi|_{U_{\alpha}}\) is an isomorphism by Theorem \ref{splitting1} and naturality of the first Pontryagin class (\cite[Definition 3.3]{A2}). So \(\varphi\) is an isomorphism by Proposition \ref{zarsep}.}

{In the case \(S\) is general, we see that \(p_{\#}(\varphi)\) is just the morphism in the statement, where \(p:X\longrightarrow S\) is the structure map.}
\end{proof}
\begin{remark}\label{pw}
The projective bundle theorem (Proposition \ref{pbt}) is false in \(\widetilde{DM}^{eff,-}(pt)\). Indeed, suppose that we have an isomorphism
\[\widetilde{\mathbb{Z}}_{pt}(\mathbb{P}^2)\cong\oplus_{i=0}^2\widetilde{\mathbb{Z}}_{pt}(i)[2i]\]
Then, applying \(A^{4,2}(-)\) on both sides, we find 
\[\widetilde{CH}^2(\mathbb{P}^2)\cong K_0^{MW}(k),\]
by Proposition \ref{base} and Lemma \ref{orthogonality}, contradicting \cite[Corollary 11.8]{F3}.
\end{remark}
\section{Gysin triangle}
\begin{theorem}\label{Thom}
{Let \(X\in Sm/S\) and \(E\) is an \(SL\)-orientable vector bundle over \(X\) with rank \(n\). Then
\[Th_S(E)\cong\widetilde{\mathbb{Z}}_S(X)(n)[2n]\]
in \(\widetilde{DM}^{eff,-}(S)\).}
\end{theorem}
\begin{proof}
{Suppose at first \(X=S\). The element \(th(E)\in A_X^{2n,n}(E)\) induces an element
\[\varphi\in Hom_{\widetilde{DM}^{eff,-}(X)}(Th_X(E),\widetilde{\mathbb{Z}}_X(n)[2n])\]
by adjunction. Suppose \(\{U_i\}\) is an open covering of \(X\) such that \(E|_{U_i}\cong O_{U_i}^{\oplus n}\) as \(SL\)-orientable bundles (see \cite[Lemma 2.7]{A2}). Then for every \(i\), \(\varphi|_{U_i}\) is just the composite
\[\xymatrix{Th_{U_i}(E|_{U_i})\ar[r]^-{\cong}&Th_{U_i}(O_{U_i}^{\oplus n})\ar[r]^{\cong}_{t_n}&\widetilde{\mathbb{Z}}_{U_i}(n)[2n]}\]
by Proposition \ref{SL}, which is an isomorphism. So by Proposition \ref{zarsep}, \(\varphi\) is an isomorphism.}

{For the case when \(S\) is general, we see that \(p_{\#}(\varphi)\) is just what we want, where \(p:X\longrightarrow S\) is the structure map.}
\end{proof}
\begin{theorem}\label{Thom1}
Let \(X\in Sm/k\) and \(E\neq 0\) is a vector bundle over \(X\) with rank \(n\). Then
\[Th_{pt}(E)\cong Th_{pt}(det(E))(n-1)[2n-2]\]
in \(\widetilde{DM}^{eff,-}(pt)\), where the right hand side only depends (up to an isomorphism) {on \(n\)} and the class of \(det(E)\) in \(Pic(X)/2Pic(X)\).
\end{theorem}
\begin{proof}
The idea of the proof comes from \cite[\S 4]{A2}. Since \(E\oplus det(E)^{\vee}\) is {\(SL\)-orientable}, we have isomorphisms
\[Th_X(E)\otimes_XTh_X(det(E)^{\vee})\cong Th_X(E\oplus det(E)^{\vee})\cong\widetilde{\mathbb{Z}}_X(n+1)[2n+2]\]
and
\[Th_X(det(E))\otimes_XTh_X(det(E)^{\vee})\cong\widetilde{\mathbb{Z}}_X(2)[4]\]
by Corollary \ref{additivity}, (3) and Theorem \ref{Thom}. Hence
\[Th_X(E)(2)[4]\cong Th_X(det(E))(n+1)[2n+2].\]
 Applying \(p_{\#}\) for the structure map \(p:X\longrightarrow pt\), we get 
\[Th_{pt}(E)(2)[4]\cong Th_{pt}(det(E))(n+1)[2n+2].\]
So the first statement follows by {Proposition \ref{cancellation}}. The second statement follows by \cite[Lemma 4.1]{A2}.
\end{proof}
Thus we have the following theorem.
\begin{theorem}\label{gysin triangle}
Let \(X\in Sm/S\) and \(Y\in Sm/S\) be a closed subscheme of \(X\) with \(\textrm{codim}(Y)=n\). 
\begin{enumerate}
\item
{If \(detN_{Y/X}\cong O_Y\), we have a distinguished triangle
\[\widetilde{\mathbb{Z}}_S(X\setminus Y)\longrightarrow\widetilde{\mathbb{Z}}_S(X)\longrightarrow \widetilde{\mathbb{Z}}_S(Y)(n)[2n]\longrightarrow\widetilde{\mathbb{Z}}_S(X\setminus Y)[1]\]
in \(\widetilde{DM}^{eff,-}(S)\).}
\item
If \(S=pt\) and \(n>0\), we have a distinguished triangle
\[\widetilde{\mathbb{Z}}_S(X\setminus Y)\longrightarrow\widetilde{\mathbb{Z}}_S(X)\longrightarrow Th_S(det(N_{Y/X}))(n-1)[2n-2]\longrightarrow\widetilde{\mathbb{Z}}_S(X\setminus Y)[1]\]
in \(\widetilde{DM}^{eff,-}(S)\), where the third term only depends (up to an isomorphism) on \(n\) and the class of \(det(N_{Y/X})\) in \(Pic(Y)/2Pic(Y)\).
\end{enumerate} 
\end{theorem}

\begin{proof}
By Proposition \ref{deformation to the normal bundle}, \(\widetilde{M}_Y(X)\cong Th_S(N_{Y/X})\). Now use Theorem \ref{Thom} and {Theorem \ref{Thom1}}.
\end{proof}
\textbf{Acknowledgments. }The author would like to thank his PhD advisor J. Fasel for giving me the basic idea of this article and helping during the subsequent research, and F. D\'eglise for helpful discussions. The careful work of the referee is also greatly appreciated.\\


\begin{thebibliography}{}
\bibitem[AF16]{AF} A. Asok, J. Fasel, {\it Comparing Euler Classes}, Q. J. Math. 67, no.4 (2016), 603-635.
\bibitem[An15]{A} A. Ananyevskiy, {\it The special linear version of the projective bundle theorem}, Compositio Math., 151:3 (2015), 461–501.
\bibitem[An16]{A3} A. Ananyevskiy, {\it On the the relation of special linear algebraic cobordism to Witt groups}, Homology Homotopy Appl., 18:1 (2016), 205–230.
\bibitem[An19]{A2} A. Ananyevskiy, {\it \(SL\)-Oriented Cohomology Theories}, arXiv:1901.01597v2 (2019).
\bibitem[CD09]{CD} D. C. Cisinski, F. D\'eglise, {\it Local and Stable Homological Algebra in Grothendieck Abelian Categories}, Homology Homotopy Appl., Volume 11, Number 1 (2009), 219-260.
\bibitem[CD13]{CD1} D. C. Cisinski, F. D\'eglise, {\it Triangulated Categories of Mixed Motives}, preprint, (2013).
\bibitem[CF14]{CF} B. Calm\`es, J. Fasel, {\it The Category of Finite Chow-Witt Correspondences}, arXiv:1412.2989 (2014).
\bibitem[D\'eg12]{D} F. D\'eglise, {\it Around the Gysin Triangle I}, Regulators, volume 571 of Contemporary Mathematics (2012), pages 77-116.
\bibitem[D07]{D1} F. D\'eglise, {\it Finite Correspondences and Transfers over a Regular Base}, Algebraic cycles and motives, Vol.1, 138-205. London Math. Soc. Lecture Note Ser., 343, Cambridge university press, 2007.
\bibitem[DF17]{DF} F. D\'eglise, J. Fasel, {\it MW-Motivic Complexes}, arXiv:1708:06095 (2017).
\bibitem[Fas08]{F1} J. Fasel, {\it Groupes de Chow-Witt} M\'em. Soc. Math. Fr. (N.S.) (2008), no.113, viii+197.
\bibitem[Fas13]{F3} J. Fasel, {\it The Projective Bundle Theorem for \(I^j\)-cohomology}, J. K-Theory 11, no.2 (2013), 413-464.
\bibitem[F\O]{FO} J. Fasel, P. A. \O stv\ae r, {\it A cancellation theorem for {M}ilnor-{W}itt correspondences}, arXiv:1708.06098 (2016).
\bibitem[Har77]{H} R. Hartshorne, {\it Algebraic Geometry}, Springer-Verlag New York Inc. (1977).
\bibitem[HW17]{HW} J. Hornbostel and M. Wendt, {\it Chow–Witt rings of classifying spaces for symplectic and special linear groups}, arXiv:1703.05362v3.
\bibitem[Kle69]{K} S. L. Kleiman, {\it Geometry on Grassmannians and Applications to Splitting Bundles and Smoothing Cycles}, Publ. Math. IHES 36 (1969), 281-297.
{\bibitem[Kra10]{Kra} H. Krause, {\it Localization Theory for Triangulated Categories}, in: T. Holm, P. J\o rgensen and R. Rouquier (eds.), Triangulated categories, 161–235, London Math. Soc. Lecture Note Ser. 375, Cambridge Univ. Press, Cambridge 2010.}
\bibitem[KL72]{KL} S. L. Kleiman, D. Laksov, {\it Schubert Calculus}, The American Mathematical Monthly 79, no.10 (1972), 1061-1082.
\bibitem[Mil80]{M} J. S. Milne, \'Etale Cohomology, Princeton University Press, Princeton, New Jersey, (1980).
\bibitem[Mor12]{Mo} F. Morel, {\it \(\mathbb{A}^1\)-Algebraic Topology over a Field}, volume 2052 of Lecture Notes in Math. Springer, New York (2012).
\bibitem[MV98]{MV} F. Morel, V. Voevodsky, {\it \(\mathbb{A}^1\)-Homotopy Theory of Schemes}, Publ. Math. IHES 90 (1999), 45-143.
\bibitem[MVW06]{MVW} C. Mazza, V. Voevodsky, C. Weibel, {\it Lecture Notes on Motivic Cohomology}, American Mathematical Society, Providence, RI, for the Clay Mathematics Institute, Cambridge, MA (2006).
\bibitem[Pan]{P} I. Panin, {\it Push-forwards in Oriented Cohomology Theories of Algebraic Varieties: II}, www.math.uiuc.edu/K-theory/0619/pushsep.pdf.
\bibitem[P03]{P1} I. Panin, {\it Oriented Cohomology Theories of Algebraic Varieties}, K-Theory 30: 265-314, (2003).
\bibitem[PS04]{PS} I. Panin, A. Smirnov, {\it Riemann-Roch Theorems for Oriented Cohomology. In: Greenlees J.P.C. (eds) Axiomatic, Enriched and Motivic Homotopy Theory. NATO Science Series (Series II: Mathematics, Physics and Chemistry)}, vol 131. Springer, Dordrecht, (2004).
\bibitem[PW10]{PW} I. Panin, C. Walter, {\it Quaternionic Grassmannians and Pontryagin Classes in Algebraic Geometry}, arXiv:1011.0649 (2010).
\bibitem[Sha94]{S} I. R. Shafarevich, {\it Basic Algebraic Geometry}, Second Edition, Springer-Verlag Berlin Heidelberg (1994).
\bibitem[SV]{SV} A. Suslin, V. Voevodsky, {\it Bloch-Kato Conjecture and Motivic Cohomology with Finite Coefficients}, www.math.uiuc.edu/K-theory/0341/susvoenew.pdf.
\bibitem[W]{W} C. Weibel, {\it An introduction to homological algebra}, Cambridge University Press (1994).
\bibitem[Yan18]{Y} N. Yang, {\it General Motivic Cohomology and Symplectic Orientation}, arXiv:1810.12802v1 (2018).
\end{thebibliography}
\end{document}